\documentclass[11pt]{article}

\usepackage{amsmath}
\usepackage{amsthm}
\usepackage{amssymb}
\usepackage{comment}
\usepackage{graphics}

\def\R{\mathbb{R}}
\def\Rp{\mathbb{R}^+}

\def\N{\mathbb{N}}

\def\S{\mathbb{S}}
\def\D{\mathbb{D}}

\def\P{\mathcal{P}}
\def\Int{\text{{\rm Int}}}
\def\Ext{\text{{\rm Ext}}}
\def \Con {\mathop {\hbox{\rm Con}}}
\def\sm{\setminus}

\setlength{\textwidth}{14cm}
\setlength{\topmargin}{15mm} \setlength{\headheight}{0cm}
\setlength{\headsep}{0cm} \setlength{\topskip}{0cm}
\setlength{\textheight}{20cm} \setlength{\oddsidemargin}{1cm}
\setlength{\evensidemargin}{1cm} \setlength{\labelwidth}{0cm}
\setlength{\leftmargin}{0cm} \setlength{\listparindent}{0cm}

\setlength{\parindent}{0cm}
\setlength{\parskip}{0cm}

\newtheorem{theorem}{Theorem}[section]
\newtheorem*{theorem*}{Theorem}
\newtheorem{lemma}[theorem]{Lemma}
\newtheorem{proposition}[theorem]{Proposition}

\newtheorem*{conjecture*}{Conjecture}

\theoremstyle{remark}
\newtheorem{remark}[theorem]{Remark}
\theoremstyle{remark}
\newtheorem*{remark*}{Remark}

\begin{document}

\title{Checking atomicity of conformal ending
measures \\  for Kleinian groups}

\author{
{\em Kurt Falk}\footnote{Research supported by the
Science Foundation Ireland}\\
\footnotesize{{\sl Department of Mathematics, NUI Maynooth,}}\\
\footnotesize{{\sl Co. Kildare, Ireland}}\\
\footnotesize{{\tt kfalk@maths.nuim.ie}}\bigskip\\
{\em Katsuhiko Matsuzaki}\\
\footnotesize{{\sl Department of Mathematics, Okayama University,}}\\
\footnotesize{{\sl Okayama, 700-8530, Japan}}\\
\footnotesize{{\tt matsuzak@math.okayama-u.ac.jp}}\bigskip\\
{\em Bernd O. Stratmann}\\
\footnotesize{{\sl Mathematical Institute, University of St Andrews,}}\\
\footnotesize{{\sl North Haugh, St Andrews KY16 9SS, UK}}\\
\bigskip
\footnotesize{{\tt bos@maths.st-and.ac.uk}}
}

\date{}

\maketitle

\begin{abstract}
\noindent
In this paper we address questions of continuity and atomicity
of conformal ending measures for arbitrary non-elementary Kleinian
groups. We give sufficient conditions under which such ending
measures are purely atomic. Moreover, we will show that if a conformal
ending measure has an atom which is contained in
the big horospherical limit set, then this atom has to be a
parabolic fixed point. Also, we give detailed discussions of
non-trivial examples for purely atomic as well as for non-atomic
conformal ending measures.
\end{abstract}
\bigskip

{\bf AMS classification:} 30F40, 37F99, 37F30, 28A80.

{\bf Keywords:} Infinitely generated Kleinian groups, exponent of
convergence, conformal measures, low-dimensional topology.

\section{Introduction and statement of the results}
The idea of a conformal ending measure was originally
 sketched  briefly by Sullivan in \cite{Sul} for a Kleinian group
with parabolic elements.
Subsequently, this construction was investigated more
rigorously and in greater generality in \cite{AFT}.
The construction of a conformal ending measure is
very similar to the well known Patterson measure construction
(\cite{Pat}). However, in \cite{FT}  it was shown that there are cases
in which conformal ending measures can not be derived via Patterson's
construction.
The basic idea of a conformal ending measure is as follows (see Section
\ref{alt-constr} for a more detailed review of conformal ending measures).
Let $\left(z_{n}\right)$ be an {\em ending sequence}, that is, a
sequence of points in the Poincar\'e ball model $(\D,d)$
of the hyperbolic space which converges
radially to some point $\zeta \in \S=\partial \D$ within some
Dirichlet fundamental domain of a Kleinian group $G$.
If $\zeta$ is a limit point of $G$, then
the orbit $G(z_n)$ accumulates at the limit set $L(G)$, for $n$ tending to
infinity.
Here, convergence is meant with respect to the
Hausdorff metric on closed subsets of the closure
$\overline \D=\D \cup \S$ in the Euclidean topology.
If $s$ is chosen such that the Poincar\'e series
$\sum_{g \in G} j(g,z)^s$ of $G$ at $z \in \D$ with exponent $s$ converges, then there exist purely atomic
$s$-conformal probability measures $\mu_n$ having their atoms in $G(z_n)$.
Here, $j(g,z)$ denotes the conformal derivative of
$g$ at $z \in \overline \D$,
that is, the uniquely determined positive number such that $g'(z)/j(g,z)$
is orthogonal, where $g'(z)$ is the Jacobian of $g$ at $z$.
For $\zeta \in \S$ we have
$j(g,\zeta)= (1-|g^{-1}(0)|^2)/|\zeta-g^{-1}(0)|^2$
(see Section~\ref{purely-atomic} for further details).
Consequently, weak limits of these measures $\mu_n$ give
rise to $s$-conformal measures supported on $\overline \D$,
and these measures are called
{\em $s$-conformal ending measures}.
The aim of this paper is to address questions of continuity and atomicity
for this type of ending measures.
Throughout we will use the phrase {\em essential support} of a measure
to denote any measurable set of full measure (not
necessarily closed nor uniquely determined), and ${\bf 1}_{z}$ denotes the Dirac delta at $z$.
The following statement  summarises Theorem~\ref{basic1},
Lemma~\ref{bounded-parabolic}, Proposition~\ref{pointmass}
and Theorem~\ref{general}.

\begin{itemize}
    \item[]
{\it Let $\left(z_{n}\right)$ be a given ending sequence
converging to $\zeta \in \S$, and
assume that $j(h, \zeta) = 1$ for
every element $h$ of the stabiliser $\mathrm{Stab}_G(\zeta)$ of
$\zeta$. Moreover, assume that the reduced horospherical
Poincar\'e series $\sum_{g \in G/\mathrm{Stab}_G(\zeta)}j(g,\zeta)^s$
of $G$ at $\zeta$ with exponent $s$ converges.
We then have that the essential support of the associated
$s$-conformal ending measure $\mu$ is equal to $G(\zeta)$. In
particular,
$\mu$ is purely atomic and coincides with the measure $\mu_\zeta$
given by
$$
\mu_\zeta=\frac{\sum_{g \in G/\mathrm{Stab}_G(\zeta)} j(g,\zeta)^s
{\bf 1}_{g(\zeta)}}{\sum_{g \in G/\mathrm{Stab}_G(\zeta)} j(g,\zeta)^s}.
$$
If $\zeta$ is an ordinary point of $G$ or a bounded parabolic fixed
point of $G$, then the reduced horospherical Poincar\'e series of $G$
at $\zeta$ with exponent $s$ converges whenever the Poincar\'e series
$\sum_{g \in G} j(g,z)$, $z \in \D$, converges.}
\end{itemize}
This result provides us with typical examples of purely atomic conformal
ending measures.  Another class of examples is
obtained by using a result of \cite{AFT},  stating that
a conformal ending measure associated to a certain given end
is essentially supported on the associated end limit set. 
We then have that if this
end limit set is countable, then every ending measure  essentially
supported on it is necessarily purely atomic. Besides, this raises
the question whether it is possible to have a countable end limit set
associated to an end which does not originate from a parabolic fixed point.
In fact, a result of Bishop \cite{bi}, stating that a
Kleinian group acting on three-dimensional hyperbolic space is
geometrically finite if and only if the set of non-radial limit points is
countable, strongly suggests that the case of parabolic cusps is the
only case in which the end limit set of a given end is countable.

In Section~\ref{big-horo-atoms} we first show that if a conformal
ending measure for a Kleinian group $G$ has an atom which is contained in
the big horospherical limit set $L_H(G)$ (see Section \ref{pre} for the
definition), then this atom has to be a
parabolic fixed point.
Note, above we gave a sufficient condition, in terms
of the convergence of the reduced horospherical Poincar\'e series at
$\zeta$, under which a conformal ending measure is purely atomic.
However, unless $\zeta$ is an ordinary point
or $\zeta$ is a bounded parabolic fixed point, it is usually
not an easy task to verify this convergence condition.
Hence, it is desirable to find
conditions which allow to decide more easily whether a conformal ending
measure has atoms or not.
By employing a result of Tukia \cite{Tu}, which states that there
always exists a measurable fundamental set for the
action of  a Kleinian group $G$ on the
complement of $L_H(G)$, in Section~\ref{big-horo-atoms} we show
that if  $G$ admits an $s$-conformal measure
essentially supported on the dissipative part of the action of
$G$ on $\S$, then we always have that there exists a purely atomic $s$-conformal ending measure for
$G$. This observation then gives rise to the following result (see
Theorem  \ref{thm5}). Here, $\delta(G)$ denotes the abzissa of
convergence of the Poincar\'e series of $G$.
\begin{itemize} \item[]
    {\it
For $s\geq \delta(G)$, we
have that
 every $s$-conformal ending measure for $G$ supported on
$L(G)$ is essentially supported on $L_H(G)$
if and only if every $s$-conformal ending measure for $G$ supported
on $L(G)$ can have atoms only at parabolic fixed points of $G$.}
\end{itemize}
Eventually, in Section~\ref{examples}, we give various non-trivial 
examples of purely atomic and of non-atomic
conformal ending measures. These include examples of
infinitely generated Schottky groups whose conformal ending
measures have atoms at J{\o}rgensen points, and examples of
non-atomic conformal ending measures on limit sets of
normal subgroups of finitely generated Schottky groups.
Moreover, we give the construction of a Kleinian group $\Gamma$
which admits a purely atomic $\delta(\Gamma)$-conformal ending measure
at some J{\o}rgensen point which is not a parabolic fixed point of $\Gamma$.

\section{Preliminaries}\label{pre}
Hyperbolic geometry in the $(N+1)$-dimensional ball model
$(\D,d)$, $N \geq 2$, and conformal geometry on the
boundary sphere $\S$ have the same automorphism group,
namely the group $\Con(N)$ of conformal maps in $\S$ which is
isomorphic to
the group of orientation-preserving hyperbolic isometries of $\D$.
(For $N=1$, $\Con(1)$ is merely defined by the latter group.)
Any transformation in $\Con(N)$ gives rise to an isometry of
$\D$, and vice versa.
It is well known that this isomorphism
arises naturally from the principle of Poincar\'e extension,
based on the elementary observation that a $(N+1)$-dimensional
hyperbolic half-space $H$ in $\D$ extends in a unique
way to a $N$-dimensional ball $\overline{H} \cap \S$.
A Kleinian group $G$ is a discrete subgroup of $\Con (N)$
and its limit set $L(G)$  is the
derived set of an arbitrary $G$-orbit, that is
$L(G) = \overline{G(z)} \setminus G(z)$, for some arbitrary $z \in \D$.
It is well known that $L(G)$ is equal to the union of the set
$L_r(G)$ of {\em radial limit points} (also called conical limit points)
and the set $L_t(G) = L(G) \setminus L_r(G)$ of {\em transient limit points}
(also called escaping limit points), where
\[
L_r(G) = \left\{ \xi \in L(G) : \liminf_{T \to \infty} \Delta(\xi_T)< \infty
\right\}.
\]
Here, $\xi_T$ denotes  the unique point on
the hyperbolic geodesic ray from the origin $0 \in \D$ to $\xi$ for
which $d(0, \xi_T)=T$,
and $\Delta(z)= \inf \{ d(z,g(0)) : g \in G \}$ denotes the hyperbolic distance
of $z \in \D$ to the orbit $G(0)$.
Important subsets of $L(G)$ are the set $L_{ur}(G)$ of
{\em uniformly radial limit points} and the set $L_J(G)$ of {\em J{\o}rgensen
limit points}. These are given as follows (cf. \cite{FS}, \cite{past01}).
$L_{ur}(G)$ is the set of  $\xi \in L(G)$ for which
$\limsup_{T \to \infty} \Delta(\xi_T) < \infty$, and
$L_J(G)$ is the set of  $\xi \in L(G)$ for which
there exists a geodesic ray towards $\xi$ which is fully contained in
some Dirichlet fundamental domain of $G$.
One easily verifies that $L_{ur}(G) \subseteq  L_r(G)$ and
$L_J(G) \subseteq L_t(G)$.
Next, recall that an element of $\Con (N)$ is called {\em parabolic}
if it has precisely one fixed point in $\S$. If
a Kleinian group $G$ contains parabolic elements, then the fixed
points of these elements
are contained in $L_J(G)$.
A horosphere at $\zeta \in \S$ can be represented by
the equivalued plane
$h_\zeta(c)=\{z \in \D : k(z, \zeta)=c\}$ of the Poisson kernel
$k(z, \zeta)=(1-|z|^2)/|\zeta-z|^2$,
for some constant $c>0$. Clearly, $h_\zeta(c)$ is a Euclidean
sphere tangent to $\S$ at $\zeta$ of radius equal to $1/(1+c)$.
The associated horoball $H_\zeta(c)$ is the open ball bounded by this horosphere, that is,
$$
H_\zeta(c)=\{z \in \D : k(z, \zeta) > c \}.
$$
Let $\mathrm{Stab}_G(\zeta)$ denote the stabiliser of 
$\zeta \in \overline{\D}$ in $G$ consisting of all
elements of $G$ fixing $\zeta$.
Clearly, if  $\zeta$ is the fixed point of a parabolic element
of $G$, then we have that any horoball at $\zeta$ is invariant
under $\mathrm{Stab}_G(\zeta)$.
Moreover, there is a constant $c>0$ such that
$g(H_\zeta(c)) \cap H_\zeta(c)=\emptyset$, for every
$g \in G \setminus \mathrm{Stab}_G(\zeta)$.
A cusped region for $\zeta$ is given by
the quotient $H_\zeta(c)/\mathrm{Stab}_G(\zeta)$ which we assume to be embedded in the hyperbolic orbifold $M_G=\D/G$.

The {\em convex hull} $C(L(G))$ is the smallest closed convex subset
of $\D$ whose Euclidean closure in $\overline{\D}$ contains $L(G)$.
Equivalently, $C(L(G))$ is the
hyperbolic convex hull of the union of all geodesic lines with end
points in $L(G)$. Clearly, the set $C(L(G))$ is invariant
under $G$, and hence, $C_G=C(L(G))/G$ is a closed subset of $M_G=\D/G$.
The set $C_G$ is called the {\em convex core} of $G$ or of $M_G$.
A Kleinian group $G$ is called {\em geometrically finite} if
there exists $\varepsilon>0$ such that the hyperbolic
$\varepsilon$-neighbourhood
of $C_G$ has finite hyperbolic volume.
We say that $\zeta$ is a {\em bounded parabolic fixed point} of
$G$ if the  intersection of  $C_G$ and
$h_\zeta(c)/\mathrm{Stab}_G(\zeta)$  is compact.
A theorem of Beardon and Maskit \cite{BeMa}
asserts that $G$ is geometrically finite if and only if
the limit set $L(G)$ is equal to the union of $L_r(G)$
and the set of bounded parabolic fixed points.

Let us also recall the following well known basic formulae 
(see e.g. \cite{Ni}).
Let $j(g,z)$ denote the {\em conformal derivative} of
$g \in \Con(N)$ at $z \in \overline \D$, that is,
$$
j(g,z)=\frac{1-|g^{-1}(0)|^2}{|z^*-g^{-1}(0)|^2|z|^2},
$$
where $z^*$ denotes the image of $z$
under the reflection at $\S$. An elementary calculation gives that if
$\zeta \in \S$, then
$$
j(g,\zeta)=\frac{1-|g^{-1}(0)|^2}{|\zeta-g^{-1}(0)|^2},
$$
and hence, $j(g,\zeta)=k(g^{-1}(0), \zeta)$.
Also, if $z \in \D$, then one finds
$$
  j(g,z) = \frac{1 - |g(z)|^{2}}{ 1 - |z|^{2}}.
$$
For $z_1,z_2 \in \D$, we define $d_\zeta(z_1,z_2)$ to be the {\em signed
hyperbolic distance} between the horosphere at $\zeta$ through $z_1$
and the horosphere at $\zeta$ through $z_2$, where `signed'  refers to
that $d_\zeta(z_1,z_2)$
is positive if $z_2$ is contained in the horoball bounded by the
horosphere at $\zeta$ through $z_1$, and negative otherwise.
Clearly, $|d_\zeta(z_1,z_2)| \leq d(z_1,z_2)$.
It is well known that the relation between this horospherical distance and the Poisson kernel is given by
$$
d_{\zeta}(z,0)= -\log k(z,\zeta).
$$

The {\em Poincar\'e series} associated with a Kleinian group $G$ is defined
for $z \in \D$ and $ s \in \R$ by
$$
P(z,s) = \sum_{g \in G} j(g,z)^{s}.
$$
Note that convergence and divergence of
$P(z,s)$ does not depend on the choice of $z \in \D$.
The {\em exponent of convergence} of $G$ is then given by
$$
\delta(G) = \inf \{ s \in \Rp : P(z,s) < \infty\}.
$$
If $P(z,s)$ converges at $s = \delta(G)$,
that is if $P(z,\delta(G)) < \infty$,
then $G$ is said to be of {\em convergence type}. Otherwise,
$G$ is said to be of {\em divergence type}.
Note that in \cite{BJ} it was shown if $G$ is some
arbitrary non-elementary Kleinian group,
then the Hausdorff dimension of $L_{r}(G)$ and of $L_{ur}(G)$ are both
equal to $\delta(G)$ (see
also \cite{SBJ}).

Also, recall that a Borel probability measure $\mu$ on $\overline \D$
is called {\em $s$-conformal} for a Kleinian group $G$ if, for each $g
\in G$ and any measurable set $A$ on
$\overline \D$,
$$
\mu(g(A))=\int_A j(g,z)^s\, d\mu(z).
$$
This condition is equivalent to that for each continuous function $f$ on $\overline \D$ and
for each $g \in G$, we have
$$
\int_{\overline \D}f(g^{-1}(z))\,d\mu(z)
=\int_{\overline \D}f(z)j(g,z)^s\,d\mu(z).
$$
For a Kleinian group $G$, the {\em big horospherical limit set} $L_H(G)$ consists of
all points $\zeta \in L(G)$ with the property that there is a constant
$c>0$ such that the orbit $G(0)$ accumulates at $\zeta$ within the
horoball $H_\zeta(c)$. It is easy to see that $L_r(G) \subseteq
L_H(G)$ and that
parabolic fixed points of $G$ are contained in $L_H(G)$.

It is known that a Kleinian group $G$ acts {\em conservatively} on $L_H(G)$  with respect to
any conformal measure $\mu$ for $G$ (see Tukia \cite{Tu}).
This means that there are infinitely many elements $g \in G$ such that
$\mu(A \cap g(A))>0$,
for each measurable set $A\subset L_H(G)$ for which $\mu(A)>0$. On the other hand, Tukia \cite{Tu} also showed
that $G$ acts {\em dissipatively} on the complement $\S \sm L_H(G)$, that
is, there exists a measurable fundamental set $E \subset \S \sm L_H(G)$
such that $G(\zeta) \cap E$ is a singleton, for
each $\zeta \in \S \sm L_H(G)$.
These results provide significant generalisations of a theorem of Sullivan
\cite{Sul81}, who derived these assertions  for the special case in
which $\mu$ is equal to the $N$-dimensional Lebesgue measure $m$.
Next, recall that the {\em big Dirichlet set} $D(G)$ of $G$ is equal
to the boundary at infinity of some Dirichlet
fundamental domain.  Clearly, $D(G)$ can be written as a disjoint
union as follows:
$$
D(G) = \Omega(G) \cup L_J(G).
$$
It is known that $D(G) \cup L_H(G)=\S$ and that $m(D(G) \cap L_H(G))=0$
(see \cite{MT}).

Finally, let us also recall from \cite{AFT} that a connected open 
subset $E$ of $M_G = \D/G$ is called an {\em end of $M_G$},
if $\partial E$ is compact and non-empty and if $E$ has non-compact
closure in $M_G$.  For a suitable index set $I$, let
$\{E^0_i:i \in I\}$ denote the set of all connected components
of the lift of $E$ to $\D$, and let $G_i$ denote
the stabiliser $\mathrm{Stab}_G(E^0_i)$ of
$E^0_i$ in $G$. The set $E^0_i$ is called an {\em end of $G$}, and $G_i$ the
corresponding {\em end group of $G$}. By construction,
the $G_i$ are subgroups of $G$ which are all conjugate to each other.
A point $z \in \S$ is called an {\em endpoint of an end} $E^0_i$ of $G$ if,
whenever $\beta$ is a hyperbolic ray towards $z$, there exists a
subray $\beta'$ of $\beta$ that is contained in $E^0_i$, for which
the hyperbolic distance $d(x,\partial E^0_i)$ tends to infinity as
$x$ approaches $z$ on $\beta'$.
Note that the set of endpoints of $E^0_i$ has empty intersection 
with $L_r(G)$.
If additionally $z \in L(G_i)$, then $z$ is called  {\em end limit point} of
$E^0_i$. Note that the set of end limit points of $E^0_i$ is
contained in $L(G_i) \setminus L_r(G)$.
The union of all end limit points of all ends $E^0_i$, $i \in I$, shall be referred to as the {\em end limit set of $G$ associated with $E$}, or simply as the {\em end limit set of $E$}.

So far, ends have been subsets $E$ of $M_G$ (or of the lift to $\D$ of
such a set) such that $E$ is a non-compact component of
$M_G \setminus \partial E$, where the boundary $\partial E$ of $E$ in
$M_G$ is compact.
Note that the definition of an end can also be given in terms
of $\overline{M}_G = (\D \cup \Omega (G))/G$. Namely, in these terms,
$E$ is a component of $\overline{M}_G \setminus \bar \partial E$,
where the boundary $\bar \partial$ is taken in $\overline{M}_G$.
In \cite{AFT} such a set $E$ was called an `end with boundary'.
Note that for ends with boundary one has by \cite[Theorem~4.4]{AFT} that there exists a non-trivial conformal measure supported by the end limit
point set of an end with boundary. Moreover,  \cite[Proposition~4.5]{AFT} then allows to extend this conformal measure for an end
group to a conformal measure for $G$. We shall make use of this
observation in the proof of Proposition~\ref{mut-sing} of this paper.
For further details on end limit sets and ends with boundary we refer
to \cite{AFT} and \cite{FT}.

\section{The construction of conformal ending measures}
\label{alt-constr}

In order to recall the construction of an $s$-conformal ending
measure,  let $F$ be a Dirichlet fundamental domain of some given Kleinian
group $G$, and let $\overline{F}$ be its  closure in $\overline{\D}$.
A sequence $\left(z_{n}\right)$ of elements $z_{n} \in F$
is called an {\em ending sequence} tending to
$\zeta \in \overline{F} \cap \S$, if each of the $z_{n}$ lies within a bounded distance from some geodesic ray towards $\zeta$ contained in $F$, and if the sequence $\left(z_{n}\right)$ tends to $\zeta$ with respect to the Euclidean metric.
Let $s \in \Rp$ be chosen  such that
$P(z,s)< \infty$. For a given ending sequence $\left(z_{n}\right)$
tending to $\zeta \in \overline{F} \cap \S$,
we define, for each $n \in \N$, the point mass $\mu_n$ on the closed unit
ball $\overline{\D}$ as follows:
$$
\mu_n = \frac{1}{P(z_{n},s)} \sum_{g \in G} j(g,z_n)^s \;
\hbox{\bf 1}_{g(z_n)}.
$$
The Helly-Bray Theorem then
ensures that, by passing to a subsequence if necessary, the sequence
$(\mu_n)$ has a weak limit measure $\mu$, and  we will refer to this
measure as {\em $s$-conformal ending measure}.
By construction, for each of the $\mu_n$ we have,  for any $g \in G$
and for any continuous function
$f$ on $\overline \D$,
\begin{eqnarray*}
\int_{\overline \D}f(g^{-1}(z))\,d\mu_n(z)
&=&\frac{1}{P(z_{n},s)} \sum_{h \in G}f(g^{-1}h(z_n)) j(h,z_n)^s\\
&=&\frac{1}{P(z_{n},s)} \sum_{h \in G}f(h(z_n)) j(gh,z_n)^s\\
&=&\frac{1}{P(z_{n},s)} \sum_{h \in G}f(h(z_n)) j(g,h(z_n))^sj(h,z_n)^s\\
&=&\int_{\overline \D}f(z)j(g,z)^s\,d\mu_n(z).
\end{eqnarray*}
Since $(\mu_n)$ converges weakly to $\mu$,
it therefore follows that $\mu$ satisfies the same $s$-conformal transformation
rule. Therefore, we have, for all $g \in G$ and  $A \subset \overline{\D}$
measurable,
$$
\mu (g(A)) = \int_A j(g,z)^s \, d\mu .
$$

Finally, let us remark that  if $L_J(G)$ is non-empty and if $\mu$ is an
$s$-conformal ending measure derived from an
ending sequence $(z_{n})$ tending towards some $\zeta \in L_J(G)$,
then the support of $\mu$ has to be equal to
$L(G)$. Indeed, consider any open set
$U \subset \overline \D$ whose closure $\overline U$ is
contained in $\overline \D \sm L(G)$.
The discontinuous action of $G$ on $\overline \D \sm L(G)$
immediately implies that there exist at most finitely many $g \in G$
such that $g(F) \cap U \neq \emptyset$.
Furthermore, we have that $g(F) \cap \overline U$ is compact in
$\overline \D \sm L(G)$, for each $g \in G$.
Since the sequence $(z_n)$ used in the definition of $\mu_n$
was chosen to be an ending sequence contained in $F$, it then follows that
there exists $n_U \in \N$ such that $\mu_n(U) = 0$, for all $n \geq n_U$.
This shows that $\mu(U) = 0$,
and hence, $\mu$ is  supported on $L(G)$.

\section{Convergence at infinity and purely atomic measures}
\label{purely-atomic}

Throughout, let $G$ be a Kleinian group.
As already  mentioned briefly in the introduction, extensions of the
Poincar\'e  series $P(z,s)$ to points in $\S$ gives rise to what we call
the {\em horospherical Poincar\'e series} of $G$.
More precisely,  the horospherical
Poincar\'e series $\P(\zeta,s)$ of $G$ at $\zeta \in \S$ with
exponent $s\in \Rp$ is given by
$$
\P(\zeta,s)= \sum_{g \in G} j(g,\zeta)^s.
$$
Note that Morosawa \cite{Mo} also considered this
type of horospherical Poincar\'e series
and obtained basic results concerning atoms
of conformal measures, some of
which will be generalised in this paper.
Also, we refer to  \cite{Ni} (Section~3.5) for some basic facts concerning
the atomic part of a conformal measure.

Let us first prove the following two elementary lemmata. These will turn out to be useful later.

\begin{lemma}
\label{convergence1}
If $\P(\zeta,s) <\infty$ for some $\zeta \in \S$,
then $ P(z,s) <\infty$ for every $z \in \D$.
Moreover, if $P(z,s) <\infty$ for some (and hence for all) $z \in \D$, then
$\P(\zeta,s) <\infty$ for each $\zeta \in \Omega(G)$.
In particular, we hence have that if $\P(\zeta,s) <\infty$ for some $\zeta \in \S$, then $s \geq
\delta(G)$.
\end{lemma}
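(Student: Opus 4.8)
The plan is to reduce each of the three assertions to an elementary pointwise comparison, valid for every single $g\in G$, between the conformal derivative $j(g,z)$ at an interior point $z\in\D$ and the conformal derivative $j(g,\zeta)$ at a boundary point $\zeta\in\S$, and then to sum the corresponding $s$-th powers over $g$.

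For the first assertion I would fix $\zeta\in\S$ and $z\in\D\setminus\{0\}$ and show that there is a constant $C=C(z)>0$, independent of $g$, with $j(g,z)\le C\,j(g,\zeta)$ for all $g\in G$; summing $s$-th powers then gives $P(z,s)\le C^{s}\,\P(\zeta,s)<\infty$, and the statement for an arbitrary $z\in\D$ follows at once since convergence of $P(z,s)$ does not depend on the choice of $z$. To obtain the pointwise bound, write $w=g^{-1}(0)$ and combine the two formulae recorded above, namely $j(g,z)=(1-|w|^{2})/(|z^{*}-w|^{2}|z|^{2})$ and $j(g,\zeta)=(1-|w|^{2})/|\zeta-w|^{2}$. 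Since $|z^{*}|=1/|z|>1>|w|$, we have $|z^{*}-w|\ge 1/|z|-1$, hence $|z^{*}-w|^{2}|z|^{2}\ge(1-|z|)^{2}$ and therefore $j(g,z)\le(1-|w|^{2})/(1-|z|)^{2}$; on the other hand $|\zeta-w|\le 1+|w|<2$ forces $1-|w|^{2}<4\,j(g,\zeta)$. Putting these together yields $j(g,z)\le 4(1-|z|)^{-2}\,j(g,\zeta)$, which is exactly the required comparison.

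For the second assertion I would fix $\zeta\in\Omega(G)=\S\setminus L(G)$. Since $\overline{G(0)}=G(0)\cup L(G)$, the point $\zeta$ lies outside the closed set $\overline{G(0)}$, so $\varepsilon_{0}:=\mathrm{dist}(\zeta,G(0))>0$ and hence $|\zeta-g^{-1}(0)|\ge\varepsilon_{0}$ for every $g\in G$. Using $j(g,\zeta)=(1-|g^{-1}(0)|^{2})/|\zeta-g^{-1}(0)|^{2}$ together with $|g^{-1}(0)|=|g(0)|$ (both equal $\tanh(d(0,g(0))/2)$, as $g^{-1}$ is a hyperbolic isometry), one gets $j(g,\zeta)\le\varepsilon_{0}^{-2}\,(1-|g(0)|^{2})=\varepsilon_{0}^{-2}\,j(g,0)$, and summing $s$-th powers gives $\P(\zeta,s)\le\varepsilon_{0}^{-2s}\,P(0,s)<\infty$.

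The third assertion is then immediate: if $\P(\zeta,s)<\infty$ for some $\zeta\in\S$, the first assertion already gives $P(z,s)<\infty$, so $s$ lies in the set $\{t\in\Rp:P(z,t)<\infty\}$ whose infimum is by definition $\delta(G)$, whence $s\ge\delta(G)$. I do not anticipate any real difficulty here; the only step needing a little care is the pointwise inequality in the first part, where one must exploit the position of $z^{*}$ relative to $\S$ in the right direction, so that the factor $1-|w|^{2}$ common to both expressions gets absorbed into $j(g,\zeta)$ rather than into a quantity that varies with $g$.
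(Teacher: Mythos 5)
Your proposal is correct and follows essentially the same route as the paper: both parts rest on the pointwise comparison $j(g,z)\le 4(1-|z|)^{-2}j(g,\zeta)$ obtained from the two formulae for the conformal derivative, and on the uniform lower bound for $|\zeta-g^{-1}(0)|$ when $\zeta\in\Omega(G)$. Your specialisation to $z=0$ via $|g(0)|=|g^{-1}(0)|$ in the second part is only a cosmetic variant of the paper's argument with general $z$.
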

\begin{proof}
Using the identities for the conformal derivative $j(g,\cdot)$ stated
in Section 2, one immediately verifies that for $\zeta \in \S$ and
$z \in \D$ we have
$$
j(g,\zeta)
=\frac{|z^*-g^{-1}(0)|^2|z|^2}{|\zeta-g^{-1}(0)|^2}j(g,z)
\geq \frac{(1-|z|)^2}{4} j(g,z).
$$
This gives the first statement in the lemma.
For the second statement note that if $\zeta \in \Omega(G)$, then
$|\zeta-g^{-1}(0)|^2 \geq c$, for some
constant $c>0$ not depending on $g \in G$. Hence,
$$
j(g,\zeta) \leq \frac{{(1+|z|)^2}}{c} j(g,z),
$$
which gives the second statement in the lemma.
\end{proof}

\begin{lemma}
\label{convergence2}
Let $\left(z_{n}\right)$ be a sequence in $\D$ which converges to
$\zeta \in \S$. Then the following hold:
\begin{enumerate}
\item [$(\rm i)$]
We always have
$\liminf_{n \to \infty}P(z_n,s) \geq
\P(\zeta,s)$, and
therefore, if $\P(\zeta,s)$ diverges, then
$\lim_{n \to \infty} P(z_n,s)=\infty$.
\item [$(\rm ii)$]
If $\zeta \in \Omega(G)$, then
$\lim_{n \to \infty}P(z_n,s) =
\P(\zeta,s)$.
\item [$(\rm iii)$]
If $\left(z_{n}\right)$ converges radially to $\zeta \in \S$, then
$\lim_{n \to \infty}P(z_n,s) =
\P(\zeta,s)$.
\end{enumerate}
\end{lemma}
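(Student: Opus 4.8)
The plan is to prove the three parts in order, using the explicit formulas for $j(g,z)$ and $j(g,\zeta)$ together with continuity of each individual summand.

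For part (i), the key observation is that for each fixed $g \in G$, the function $z \mapsto j(g,z)^s$ extends continuously to $\overline{\D}$, so $\lim_{n\to\infty} j(g,z_n)^s = j(g,\zeta)^s$ as $z_n \to \zeta$. Since all terms are nonnegative, Fatou's lemma for series (i.e.\ applied to counting measure on $G$) immediately gives
$$
\liminf_{n\to\infty} P(z_n,s) = \liminf_{n\to\infty}\sum_{g\in G} j(g,z_n)^s \geq \sum_{g\in G}\liminf_{n\to\infty} j(g,z_n)^s = \P(\zeta,s).
$$
The consequence about divergence is then immediate.

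For part (ii), one needs the reverse inequality $\limsup_{n\to\infty} P(z_n,s)\leq \P(\zeta,s)$ when $\zeta\in\Omega(G)$. I would seek a dominated-convergence argument: produce a summable bound for $j(g,z_n)^s$ uniform in $n$, of the form $j(g,z_n)^s \leq C\, j(g,w)^s$ for a suitable fixed $w\in\D$ and constant $C$ independent of $g$ and of $n$ (for $n$ large). Because $\zeta \in \Omega(G)$ sits at positive distance from $L(G)$, hence from the orbit $\overline{G(0)}$, the quantities $|\zeta - g^{-1}(0)|$ are bounded below, and since $z_n\to\zeta$ the quantities $|z_n - g^{-1}(0)|$ (equivalently the relevant factors in the formula $j(g,z_n) = (1-|g(z_n)|^2)/(1-|z_n|^2)$, rewritten via the identity from Section~2 relating $j(g,z_n)$ and $j(g,\zeta)$) are comparable to $|\zeta - g^{-1}(0)|$ up to constants not depending on $g$, once $n$ is large enough. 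This is essentially the computation already carried out in Lemma~\ref{convergence1}, applied with $z_n$ in place of $z$ and with control uniform in $n$. Dominated convergence then upgrades the liminf of part (i) to a genuine limit.

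For part (iii), radial convergence means the $z_n$ lie within bounded hyperbolic distance of the geodesic ray from $0$ to $\zeta$. The idea is the same domination strategy, but now the uniform bound must be extracted from this geometric condition rather than from $\zeta$ being an ordinary point: boundedness of $d(z_n, \xi_{T_n})$ translates, via the cocycle/chain rule $j(g,z_n) \le e^{d(z_n,\xi_{T_n})} j(g,\xi_{T_n})$ and the fact that $j(g,\cdot)^s$ along the ray is controlled by its value at a fixed reference point up to a multiplicative constant depending only on the (bounded) distance, into a bound $j(g,z_n)^s \le C\, j(g,w)^s$ with $C$ independent of $g$ and $n$. Again dominated convergence finishes the argument. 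The main obstacle in both (ii) and (iii) is precisely establishing the $g$-uniform and $n$-uniform domination; once that is in place, the limits follow formally. I would therefore isolate that domination estimate as the technical heart of the proof and present parts (ii) and (iii) as two instances of it, differing only in how the bounding constant is produced.
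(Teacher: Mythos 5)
Parts (i) and (ii) of your proposal are essentially the paper's argument: (i) is exactly Fatou's lemma for counting measure, and in (ii) the comparability of $|z_n^*-g^{-1}(0)|$ with $|\zeta-g^{-1}(0)|$, uniform in $g$ and $n$, does hold because $\zeta\in\Omega(G)$ keeps the orbit closure at positive distance from $\zeta$; the paper dominates by $j(g,\zeta)^s$ rather than by $j(g,w)^s$ for a fixed $w\in\D$, but for ordinary $\zeta$ these are interchangeable by Lemma~\ref{convergence1}. One small point you should make explicit in (ii) (the paper does): you must first reduce to the case $\P(\zeta,s)<\infty$, since otherwise no summable dominating family exists and dominated convergence cannot be invoked; the divergent case is already settled by (i).

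Part (iii), however, has a genuine gap. The dominating bound you aim for, $j(g,z_n)^s\le C\,j(g,w)^s$ with a \emph{fixed} $w\in\D$ and $C$ independent of $g$ and $n$, is false precisely in the case the lemma is needed for, namely $\zeta\in L(G)$ (e.g.\ $\zeta\in L_J(G)$ in Theorem~\ref{basic1}). Indeed, from the formulas in Section~2, $j(g,z_n)/j(g,w)=|w^*-g^{-1}(0)|^2|w|^2\,/\,\bigl(|z_n^*-g^{-1}(0)|^2|z_n|^2\bigr)$, and the numerator is bounded above and below by constants depending only on $w$; so a uniform bound would force $|z_n^*-g^{-1}(0)|$ to be bounded below uniformly in $g$ and $n$. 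But if $\zeta$ is a limit point, for each $n$ one can pick $g$ with $|g^{-1}(0)-\zeta|$ comparable to $1-|z_n|$, whence $|z_n^*-g^{-1}(0)|$ is comparable to $1-|z_n|$ and the ratio blows up like $(1-|z_n|)^{-2}$. Your intermediate step is also not valid as stated: after $j(g,z_n)\le e^{2d(z_n,\xi_{T_n})}j(g,\xi_{T_n})$ (note the Lipschitz constant is $2$, not $1$), you cannot pass from $\xi_{T_n}$ to a fixed reference point ``up to a constant depending only on the bounded distance'', because $d(\xi_{T_n},w)\to\infty$; the only bounded quantity is $d(z_n,\xi_{T_n})$. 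The correct dominant is $j(g,\zeta)^s$ itself, i.e.\ the value at the boundary point: radial approach forces the Euclidean angle at $\zeta$ in the triangle with vertices $g^{-1}(0)$, $\zeta$, $z_n^*$ to be bounded away from zero uniformly in $g$ and $n$, and the sine rule gives $|\zeta-g^{-1}(0)|\le (1/\sin\theta)\,|z_n^*-g^{-1}(0)|$, hence $j(g,z_n)^s\le c^{-s}j(g,\zeta)^s$; together with the reduction to $\P(\zeta,s)<\infty$, dominated convergence then finishes the proof. This is exactly how the paper argues part (iii).
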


\begin{proof}
The statement in (i) is an immediate consequence of Fatou's Lemma.
For the remaining statements, we can assume without loss of
generality that $\P(\zeta,s)$ converges.
If $\zeta \in \Omega(G)$, then clearly $|\zeta-g^{-1}(0)|^2$ is uniformly bounded away from $0$, for all $g \in G$. Therefore, since $|z_n^*-g^{-1}(0)|^2$
tends to $|\zeta-g^{-1}(0)|^2$ for
$n$ tending to infinity, there exists a constant $c_{1}>0$ such
that, for each $g \in G$ and every $n\in \N$,
$$
|z_n^*-g^{-1}(0)|^2|z_n|^2 \geq c_{1}|\zeta-g^{-1}(0)|^2.
$$
Hence, $j(g,z_n)^s \leq c_{1}^{-s} j(g,\zeta)^s$.
For the remaining assertion, assume that $z_n$ converges radially to $\zeta$,
in the sense that each $z_n$ is at most a uniformly bounded distance away from some geodesic ray ending at $\zeta$.
This immediately implies that the angle $\theta$ at $\zeta$ in the Euclidean triangle with vertices $g^{-1}(0)$, $\zeta$ and $z_n^*$ is uniformly bounded away from zero. An application of the Euclidean sine rule then gives that $|\zeta-g^{-1}(0)| \leq (1/\sin \theta) |z_n^*-g^{-1}(0)|$, for all
$n\in \N$ and $g \in G$.
Thus, there exists a constant $c_{2}>0$
such that, for each $g \in G$ and every $n\in \N$,
$$
|z_n^*-g^{-1}(0)|^2|z_n|^2 \geq c_{2}|\zeta-g^{-1}(0)|^2.
$$
This implies that $j(g,z_n)^s \leq c_{2}^{-s} j(g,\zeta)^s$.
Since we have by definition that $j(g,z_n)$ tends to $j(g,\zeta)$ for $n$ tending to infinity, the dominated convergence theorem now gives in the situation of (ii) and of (iii) that $P(z_n,s)$ tends to $\P(\zeta,s)$ 
for $n$ tending to infinity.
\end{proof}

The following theorem shows that convergence and divergence of the horospherical Poincar\'e series
is crucial for finding out whether $s$-conformal
ending measures have atoms or not.

\begin{theorem}
\label{basic1}
Let $\left(z_{n}\right)$ be a given ending sequence
which tends to $\zeta$, for some
$\zeta \in D(G)=\Omega(G) \cup L_{J}(G)$ not fixed by
any non-trivial element of $G$. Also, suppose that $\P(\zeta,s)$ converges
for $s \geq \delta(G)$.
We then have that the essential support of the associated
$s$-conformal ending measure $\mu$ is equal to $G(\zeta)$. In
particular, $\mu$ is purely atomic and coincides with
$$
\mu_\zeta=\frac{1}{\P(\zeta,s)}\sum_{g \in G} j(g,\zeta)^s \,
{\bf 1}_{g(\zeta)}.
$$
\end{theorem}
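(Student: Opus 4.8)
The plan is to establish the claim in two steps: first show that $\mu_\zeta$ as defined is a well-defined $s$-conformal probability measure with essential support $G(\zeta)$, and then show that the ending measure $\mu$ coincides with it by identifying the weak limit of the $\mu_n$. For the first step I would verify directly that $\mu_\zeta$ transforms correctly: using the cocycle identity $j(gh,\zeta)=j(g,h(\zeta))j(h,\zeta)$ and the fact that $\zeta$ is not fixed by any non-trivial element (so the orbit map $g\mapsto g(\zeta)$ is a bijection onto $G(\zeta)$ and the sum $\sum_g j(g,\zeta)^s{\bf 1}_{g(\zeta)}$ has no collisions), one checks $\mu_\zeta(g(A))=\int_A j(g,z)^s\,d\mu_\zeta(z)$ by a reindexing argument identical to the computation already carried out for the $\mu_n$ in Section~\ref{alt-constr}. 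Finiteness of $\P(\zeta,s)$ makes the normalisation legitimate, and purely atomic with essential support $G(\zeta)$ is then immediate.

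For the second step, the key point is that $\mu_n$ converges weakly to $\mu_\zeta$. Since $\zeta\in D(G)$ lies either in $\Omega(G)$ or in $L_J(G)$ and the sequence $(z_n)$ is an ending sequence towards $\zeta$ (hence at bounded distance from a geodesic ray in the Dirichlet domain $F$), Lemma~\ref{convergence2}(ii) or (iii) gives $\lim_n P(z_n,s)=\P(\zeta,s)$, so the normalising constants converge to the right thing. For the numerators, I would test against an arbitrary continuous $f$ on $\overline{\D}$: the mass of $\mu_n$ at $g(z_n)$ is $j(g,z_n)^s/P(z_n,s)$, the point $g(z_n)$ converges to $g(\zeta)$ for each fixed $g$, and $j(g,z_n)^s\to j(g,\zeta)^s$. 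The domination established in the proof of Lemma~\ref{convergence2} (namely $j(g,z_n)^s\le c^{-s}j(g,\zeta)^s$ uniformly in $n$ and $g$, valid precisely because $\zeta\in\Omega(G)$ or $(z_n)$ is radial) lets one apply dominated convergence to the sum over $g\in G$, yielding $\int f\,d\mu_n\to\frac{1}{\P(\zeta,s)}\sum_g f(g(\zeta))j(g,\zeta)^s=\int f\,d\mu_\zeta$. Hence every weak subsequential limit $\mu$ equals $\mu_\zeta$.

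The main obstacle is making the dominated convergence argument fully rigorous when $\zeta\in L_J(G)$ rather than $\Omega(G)$: here $\zeta$ is a limit point, so $|\zeta-g^{-1}(0)|$ is not bounded below, and the uniform bound $|z_n^*-g^{-1}(0)|^2|z_n|^2\ge c\,|\zeta-g^{-1}(0)|^2$ relies on the geometric input that the $z_n$ stay within bounded hyperbolic distance of a ray towards $\zeta$ lying in $F$ — exactly the Euclidean sine-rule argument in Lemma~\ref{convergence2}. One must also confirm that mass does not escape to other points of $\overline{\D}$: the total mass is $1$ for every $\mu_n$ and the limit $\mu_\zeta$ also has total mass $1$ (since $\P(\zeta,s)<\infty$), so by the dominated convergence computation there is no loss of mass, and $\mu=\mu_\zeta$ on the nose. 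A final remark: the discussion at the end of Section~\ref{alt-constr} shows that when $\zeta\in L_J(G)$ the support of $\mu$ is forced to be all of $L(G)$, which is consistent since $\overline{G(\zeta)}\supseteq L(G)$ for $\zeta\in L(G)$ — so there is no contradiction between "essential support $G(\zeta)$" and "support $L(G)$".
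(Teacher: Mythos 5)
Your argument is correct, but it takes a genuinely different route from the paper's. The paper works with the compact set $K=\{z_1,z_2,\ldots\}\cup\{\zeta\}$, applies the weak-convergence inequality $\limsup_{n}\mu_n(C)\le\mu(C)$ (valid for compact $C$) to every translate $h(K)$, and sums over $h\in G$; combined with Lemma~\ref{convergence2}, which gives $P(z_n,s)\to\P(\zeta,s)<\infty$, this forces $\mu(G(\zeta))=1$, and the identification of $\mu$ with $\mu_\zeta$ then follows from $s$-conformality together with the triviality of $\mathrm{Stab}_G(\zeta)$ (the mechanism isolated later in Proposition~\ref{pointmass}). You instead prove genuine weak convergence $\mu_n\to\mu_\zeta$ by testing against an arbitrary continuous $f$ and applying dominated convergence to the sum over $G$, the domination $j(g,z_n)^s\le c^{-s}j(g,\zeta)^s$ being exactly the estimate from the proof of Lemma~\ref{convergence2}(iii), which applies because an ending sequence is by definition radial (and (ii) covers $\zeta\in\Omega(G)$). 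Both routes rest on the same analytic core, namely Lemma~\ref{convergence2} and the finiteness of $\P(\zeta,s)$, but yours buys a slightly stronger conclusion: the full sequence $(\mu_n)$ converges, so no passage to a subsequence is needed and the atom weights are identified directly, whereas the paper's argument is shorter because it only needs upper semicontinuity on compacta and lets conformality do the bookkeeping of the atomic masses. Your verification that $\mu_\zeta$ is $s$-conformal via the cocycle identity mirrors the computation of Section~\ref{alt-constr}, and your closing remark resolving the apparent tension between essential support $G(\zeta)$ and topological support $L(G)$ is also correct, since $\overline{G(\zeta)}\supseteq L(G)$ when $\zeta\in L(G)$.
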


\begin{proof}
Let $K=\{z_{1}, z_{2},\ldots\} \cup \{\zeta\}$ and consider
$$
\mu_{n}=\frac{1}{P(z_{n},s)}\sum_{g \in G}
j(g,z_{n})^s \, {\bf 1}_{g(z_{n})}.
$$
Clearly $K$ is a compact subset of $\D \cup \S$.
Combining this with the observation that
$\limsup_{n \to \infty} \mu_{n}(C) \leq \mu(C)$
for each $C \subset \D \cup \S$ compact, which is an immediate
consequence of weak convergence, we obtain
\begin{eqnarray*}
1 &\geq& \mu(G(\zeta))
= \sum_{h \in G} \mu (h(K))
\geq
\sum_{h \in G} \limsup_{n \to \infty} \mu_{n} (h(K))
\\ &=&
\sum_{h \in G} \limsup_{n \to \infty}
\frac{j(h,z_{n})^{s}}{P(z_{n},s)}.
\end{eqnarray*}
By Lemma \ref{convergence2}, we have
$$
\lim_{n \to \infty}
P(z_{n},s)=\P(\zeta,s).
$$
Since  $\P(\zeta,s)$ converges by assumption,
the last term in the above chain of inequalities is equal to $1$.
This shows that $\mu(G(\zeta))=1$, and thus the probability measure
$\mu$ is essentially supported on the countable set $G(\zeta)$.
\end{proof}

Note that the statement of Theorem~\ref{basic1} includes the case in
which $\zeta \in \Omega(G)$. In this case the resulting measure
$ \mu_\zeta = \frac{1}{\P(\zeta,s)} \sum_{g \in G} j(g,\zeta)^s \,
\hbox{\bf 1}_{g(\zeta)}$ is a conformal measure in the sense we use here
(see Section~\ref{pre}), although strictly speaking it is not supported on the limit set of $G$.

Next we consider the case in which $\zeta$ is a parabolic fixed point
of $G$.
In this situation we have that the horospherical Poincar\'e series
$\P(\zeta,s)$ at $\zeta$
diverges, for every $s\in \R$. This follows, since
$j(g,\zeta)= e^{-d_\zeta(g^{-1}(0),0))}$ is
constant for infinitely many $g \in G$.
In this situation we then consider the {\em reduced horospherical
Poincar\'e series}
$$
\P_{\mbox{\tiny{red}}}(\zeta,s)=\sum_{g \in G/\mathrm{Stab}_G(\zeta)}
j(g,\zeta)^{s},
$$
and if this series converges then we have that the associated
$s$-conformal measure
$$
\mu_\zeta = \frac{1}{\P_{\mbox{\tiny{red}}}(\zeta,s)}
\sum_{g \in G/\mathrm{Stab}_G(\zeta)}
j(g,\zeta)^s \;
\hbox{\bf 1}_{g(\zeta)}
$$
is purely atomic.
Note that $\mu_\zeta$ is indeed an $s$-conformal measure for $G$, as a computation similar to the one in Section~\ref{alt-constr} readily shows.
For the case in which $\zeta$ is a bounded parabolic
fixed point, the following lemma shows
that if the Poincar\'e series converges, then  the reduced horospherical
Poincar\'e series converges as well.
Therefore, if $s >\delta(G)$ or if $s=\delta(G)$ for $G$  of convergence
type, then we always have that there exists a purely atomic
$s$-conformal measure essentially supported on $G(\zeta)$.

\begin{lemma}
\label{bounded-parabolic}
For $\zeta$ a bounded parabolic fixed point of a Kleinian group $G$, we
have that
if $P(z,s)$ converges for $s \geq \delta(G)$, then so does
$\P_{\mbox{\tiny{red}}}(\zeta,s)$.
\end{lemma}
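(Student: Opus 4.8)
The plan is to dominate the reduced horospherical series by the ordinary Poincar\'e series term by term, after choosing for each coset in $G/\mathrm{Stab}_G(\zeta)$ a representative whose action carries the base point essentially along the geodesic towards $\zeta$; the r\^ole of the hypothesis is that bounded parabolicity makes this choice uniformly efficient. Write $\Gamma_\zeta=\mathrm{Stab}_G(\zeta)$. I would first make two harmless reductions. Since $d_\zeta(\cdot,\cdot)$ is a cocycle with $|d_\zeta|\le d$, replacing the base point $0$ in the definition $j(g,\zeta)=k(g^{-1}(0),\zeta)=e^{-d_\zeta(g^{-1}(0),0)}$ by any $o\in\D$ multiplies each summand of $\P_{\mbox{\tiny{red}}}(\zeta,s)$ by a factor in $[e^{-2sd(0,o)},e^{2sd(0,o)}]$, while convergence of $P(z,s)$ does not depend on $z$ at all; hence one may assume that the base point $0$ lies in $C(L(G))$, so that the whole orbit $G(0)$ lies in $C(L(G))$. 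Also, by the horoball invariance recalled in Section~\ref{pre} we have $j(h,\zeta)=1$ for every $h\in\Gamma_\zeta$, so $j(g,\zeta)$ depends only on the coset $g\Gamma_\zeta$; I would then fix, for each coset $[g]$, a representative $g^{\ast}$ minimising $d\bigl(0,(g^{\ast})^{-1}(0)\bigr)$, set $w_{[g]}:=(g^{\ast})^{-1}(0)$ (the point of the $\Gamma_\zeta$-orbit of $g^{-1}(0)$ closest to $0$), and record that $\P_{\mbox{\tiny{red}}}(\zeta,s)=\sum_{[g]}k(w_{[g]},\zeta)^{s}$.

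Next I would separate off a bounded exceptional part. Fix a horoball $H=H_\zeta(c)$ as in Section~\ref{pre}, so that $\gamma(H)\cap H=\emptyset$ for every $\gamma\in G\setminus\Gamma_\zeta$. The translates $g(H)$ depend only on $g\Gamma_\zeta$ and are pairwise disjoint across distinct cosets, so $0$ lies in at most one of them; equivalently, at most one coset satisfies $k(g^{-1}(0),\zeta)>c$, and its contribution $A$ to $\P_{\mbox{\tiny{red}}}(\zeta,s)$ is a finite constant (with $A=0$ if there is no such coset). For every other coset $w_{[g]}\notin H$, and here is the geometric heart of the matter. Passing to the upper half-space model with $\zeta=\infty$ and $0=(0,t_0)$, write $w_{[g]}=(x,t)$; then $t\le t_1$ for a constant $t_1$ (the height of $\partial H$) and $k(w_{[g]},\zeta)=t/t_0$. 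Since $w_{[g]}\in G(0)\subseteq C(L(G))$: for any $r<\mathrm{dist}\bigl(x,\,L(G)\setminus\{\infty\}\bigr)$ the geodesic half-space $\{(w,u):|w-x|^2+u^2>r^2\}$ contains $L(G)$, hence $C(L(G))$, hence $w_{[g]}$, forcing $t\ge r$; letting $r$ increase gives $\mathrm{dist}\bigl(x,\,L(G)\setminus\{\infty\}\bigr)\le t\le t_1$. Now the bounded-parabolic hypothesis enters: it says exactly that $\Gamma_\zeta$ acts cocompactly on $L(G)\setminus\{\infty\}$ (the standard reformulation of the defining condition recalled in Section~\ref{pre}), so if $F$ denotes the closed $t_1$-neighbourhood of a compact set $K\subset\R^N$ with $\Gamma_\zeta(K)\supseteq L(G)\setminus\{\infty\}$, then $F$ is bounded and $\Gamma_\zeta(F)\supseteq\{y:\mathrm{dist}(y,L(G)\setminus\{\infty\})\le t_1\}$, which contains $x$. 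Thus some $\Gamma_\zeta$-translate of $x$ lies in $F$, and minimality of $|x|$ in the $\Gamma_\zeta$-orbit forces $|x|\le M:=\sup_{y\in F}|y|$.

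Finally I would cash this in. From $|x|\le M$ and $t\le t_1$ the identity $\cosh d(0,w_{[g]})=1+(|x|^2+(t-t_0)^2)/(2tt_0)$ yields $e^{d(0,w_{[g]})}\le C/t$ for a constant $C=C(M,t_0,t_1)$, so, using $1-|v|^2\ge e^{-d(0,v)}$,
\[
k(w_{[g]},\zeta)=t/t_0\ \le\ (C/t_0)\,e^{-d(0,w_{[g]})}\ \le\ (C/t_0)\bigl(1-|w_{[g]}|^{2}\bigr)\ =\ (C/t_0)\,j\bigl((g^{\ast})^{-1},0\bigr).
\]
Distinct cosets give distinct group elements $(g^{\ast})^{-1}$, so summing over all cosets and adding back the exceptional term $A$ gives
\[
\P_{\mbox{\tiny{red}}}(\zeta,s)\ \le\ A\,+\,(C/t_0)^{s}\sum_{g\in G}j(g,0)^{s}\ =\ A\,+\,(C/t_0)^{s}\,P(0,s)\ <\ \infty,
\]
which is the assertion.

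The step I expect to be the main obstacle is the uniform estimate $k(w_{[g]},\zeta)\lesssim e^{-d(0,w_{[g]})}$ across all cosets --- this is where the word ``bounded'' is genuinely used. It relies on combining (i) placing the base point inside the convex core, so that orbit points stay in $C(L(G))$, (ii) the support-half-space bound pinning $x$ to within bounded height of the limit set, and (iii) cocompactness of the $\Gamma_\zeta$-action on $L(G)\setminus\{\infty\}$, which upgrades ``close to the limit set at bounded height'' into ``boundedly close to the geodesic from $0$ to $\zeta$''. For an unbounded parabolic fixed point the minimising representatives can be forced arbitrarily far from that geodesic, and the term-by-term comparison with $P(z,s)$ breaks down.
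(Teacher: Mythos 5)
Your proof is correct and takes essentially the same route as the paper: in each coset of $G/\mathrm{Stab}_G(\zeta)$ you pick the representative whose orbit point is closest to the base point, use bounded parabolicity to show that for these representatives $j(\gamma,\zeta)$ is bounded by a uniform constant times $e^{-d(0,\gamma^{-1}(0))}$, and then dominate $\P_{\mbox{\tiny{red}}}(\zeta,s)$ term by term by $P(0,s)$. The only difference is bookkeeping: the paper applies cocompactness of $\mathrm{Stab}_G(\zeta)$ directly on $h_\zeta(c)\cap C(L(G))$ (its definition of bounded parabolic fixed point), while you invoke the standard equivalent formulation (cocompact action on $L(G)\setminus\{\zeta\}$) and carry out the comparison explicitly in the upper half-space model, separating off the single coset whose orbit meets the invariant horoball.
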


\begin{proof}
Let $c>0$ be chosen such that
$H_\zeta(c)/\mathrm{Stab}_G(\zeta)$ is a cusped region in $M_G$.
Without loss of generality, we may assume
$0 \in h_\zeta(c) \cap C(L(G))$.
Since $\zeta$ is a bounded parabolic fixed point,
there exists a constant $b>0$ such that
$d(0,(\mathrm{Stab}_G(\zeta))\,(z)) \leq b$, for each
$z \in h_\zeta(c) \cap C(L(G))$.
Then choose representatives $\gamma \in G$ of the cosets in
$G/\mathrm{Stab}_G(\zeta)$ such that
$$
d(0,\gamma(0))= \min \{d(0,g(0)): g \in \gamma \,
\mathrm{Stab}_G(\zeta)\}.
$$
With this choice, we then have, for each of these coset
representatives $\gamma$,
$$
e^{-d_\zeta(\gamma^{-1}(0),0)} \leq e^b e^{-d(\gamma^{-1}(0),0)}.
$$
Since $j(g,\zeta)=e^{-d_{\zeta}(\gamma^{-1}(0),0)}$
for every $g \in \gamma \, \mathrm{Stab}_G(\zeta)$,
it follows that
$$
    \P_{\mbox{\tiny{red}}}(\zeta,s)
=
\sum_{\gamma \in G/\mathrm{Stab}_G(\zeta)}
\!\!\! e^{-sd_\zeta(\gamma^{-1}(0),0)}
\leq
e^{sb} \!\!\! \sum_{\gamma \in G/\mathrm{Stab}_G(\zeta)}
\!\!\! e^{-sd(\gamma^{-1}(0),0)}
\leq
e^{sb} P(0,s).
$$
This finishes the proof of the lemma.
\end{proof}

We now look at the more general situation in which there exists a purely atomic $s$-conformal measure essentially supported on $G(\zeta)$ for some point $\zeta \in \S$.
Note that the measures considered in the following proposition  are conformal measure which are not necessarily ending measures as introduced in Section~\ref{alt-constr}.

\begin{proposition}
\label{pointmass}
Let $G$ be a Kleinian group, and let $\zeta \in \S$ and $s \in \R$ be
fixed. Then there exists a purely atomic
$s$-conformal measure $\mu_\zeta$  essentially supported on $G(\zeta)$ if and only if
$$
\hbox{$j(h, \zeta) = 1$
for each $h \in \mathrm{Stab}_G(\zeta)$, and }
\P_{\mbox{\tiny{red}}}(\zeta,s) <\infty.
$$
\end{proposition}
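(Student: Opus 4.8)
The plan is to prove both implications by directly manipulating the defining $s$-conformal transformation rule on the orbit $G(\zeta)$. First I would establish the \emph{necessity} of the two conditions. Suppose $\mu_\zeta$ is a purely atomic $s$-conformal measure essentially supported on $G(\zeta)$, and write $\mu_\zeta = \sum_{g \in G/\mathrm{Stab}_G(\zeta)} a_g\, \mathbf{1}_{g(\zeta)}$ with $a_g > 0$ and $\sum a_g = 1$; here I am using that the atoms are exactly the points of the orbit (up to $\mathrm{Stab}_G(\zeta)$), and that $g(\zeta)$ depends only on the coset $g\,\mathrm{Stab}_G(\zeta)$. Applying the conformal rule $\mu_\zeta(g(A)) = \int_A j(g,z)^s\,d\mu_\zeta$ with $A = \{\zeta\}$ first to an $h \in \mathrm{Stab}_G(\zeta)$ gives $a_e = \mu_\zeta(\{h(\zeta)\}) = \mu_\zeta(\{\zeta\}) = a_e \, j(h,\zeta)^s$, and since $a_e > 0$ this forces $j(h,\zeta)^s = 1$; but $j(h,\zeta) > 0$, so $j(h,\zeta) = 1$ (for $s \ne 0$; the case $s = 0$ is trivial since then $\P_{\mbox{\tiny{red}}}$ is just the cardinality of the coset space and every $j$-power is $1$ automatically — I would remark on this). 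Next, applying the rule with $A = \{\zeta\}$ and a general $g \in G$ yields $a_g = a_e\, j(g^{-1},\zeta)^s$ — wait, more carefully: $\mu_\zeta(g(\{\zeta\})) = j(g,\zeta)^s \mu_\zeta(\{\zeta\})$, i.e.\ $a_{g} \cdot(\text{coefficient of }g(\zeta)) = j(g,\zeta)^s a_e$, using the cocycle identity to reconcile indexing; summing over a coset transversal and using $\sum_g a_g = 1$ then gives $a_e \sum_{g \in G/\mathrm{Stab}_G(\zeta)} j(g,\zeta)^s = 1$, so $\P_{\mbox{\tiny{red}}}(\zeta,s) = 1/a_e < \infty$.

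For the \emph{converse}, assume $j(h,\zeta) = 1$ for all $h \in \mathrm{Stab}_G(\zeta)$ and $\P_{\mbox{\tiny{red}}}(\zeta,s) < \infty$. I would simply \emph{define}
$$
\mu_\zeta = \frac{1}{\P_{\mbox{\tiny{red}}}(\zeta,s)} \sum_{g \in G/\mathrm{Stab}_G(\zeta)} j(g,\zeta)^s\, \mathbf{1}_{g(\zeta)},
$$
noting first that this is well defined: the condition $j(h,\zeta) = 1$ on $\mathrm{Stab}_G(\zeta)$ together with the cocycle relation $j(gh,\zeta) = j(g,h(\zeta))^{\phantom{s}}\! j(h,\zeta) = j(g,\zeta)\, j(h,\zeta) = j(g,\zeta)$ (using $h(\zeta) = \zeta$) shows that the summand depends only on the coset, so the sum over the transversal is unambiguous, and it is finite and positive by hypothesis, hence $\mu_\zeta$ is a genuine Borel probability measure, purely atomic, essentially supported on $G(\zeta)$. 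It then remains to verify the $s$-conformal transformation rule, which is the computation already alluded to after the definition of the reduced series in the text ("a computation similar to the one in Section~\ref{alt-constr} readily shows"): for any measurable $A$ and any $g_0 \in G$, one expands $\mu_\zeta(g_0(A))$ as a sum over those cosets $g\,\mathrm{Stab}_G(\zeta)$ with $g(\zeta) \in g_0(A)$, reindexes by $g = g_0 g'$, and uses the cocycle identity $j(g_0 g',\zeta)^s = j(g_0, g'(\zeta))^s\, j(g',\zeta)^s$ to pull out the factor $j(g_0, g'(\zeta))^s$, recognising the remaining sum as $\int_A j(g_0,z)^s\, d\mu_\zeta(z)$.

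The main obstacle — really the only place requiring care — is the bookkeeping over the coset space $G/\mathrm{Stab}_G(\zeta)$: one must consistently check that every quantity that appears ($j(g,\zeta)^s$, the atoms $g(\zeta)$, the coefficients $a_g$) is a genuine function of the coset and not just of the group element, and this is exactly where the hypothesis $j(h,\zeta)=1$ for $h \in \mathrm{Stab}_G(\zeta)$ is used — it is not an extra technical assumption but precisely the obstruction to well-definedness. A secondary small point is that in the necessity direction one should argue that the essential support being $G(\zeta)$ forces \emph{every} point of $G(\zeta)$ to be an atom with positive mass (rather than allowing some orbit points to have zero mass): if some $g(\zeta)$ had $\mu_\zeta$-measure zero, the conformal rule applied to $g^{-1}$ would propagate this to $\zeta$ and then, again via the rule, to the entire orbit, contradicting $\mu_\zeta$ being a probability measure. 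Once these indexing and positivity issues are handled, both directions reduce to one application each of the cocycle identity and the normalisation $\mu_\zeta(\overline{\D}) = 1$.
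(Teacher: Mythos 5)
Your argument is correct, and its engine is the same as the paper's: conformality applied to single orbit points gives $\mu(\{g(\zeta)\})=j(g,\zeta)^s\,\mu(\{\zeta\})$, summing over a coset transversal identifies the total mass with $\mu(\{\zeta\})\,\P_{\mbox{\tiny{red}}}(\zeta,s)$, and the converse is the explicit measure $\mu_\zeta$ together with the cocycle/reindexing verification that the paper delegates to the computation of Section~\ref{alt-constr}. Where you differ is in how the stabiliser condition is extracted in the necessity direction: the paper first reduces to the case of no elliptic elements and then argues that a non-trivial $\mathrm{Stab}_G(\zeta)$ forces $\zeta$ to be a parabolic rather than a loxodromic fixed point, whence $j(h,\zeta)=1$; you instead read $j(h,\zeta)^s=1$ directly off the conformal rule at the atom $\zeta$ (whose positivity you rightly justify by propagating positive mass around the orbit via conformality), which treats elliptic, parabolic and loxodromic stabiliser elements uniformly and dispenses with the case analysis --- a small but genuine streamlining, and your emphasis that $j(h,\zeta)=1$ is exactly what makes all coset-indexed quantities well defined is well placed. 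The one blemish is your parenthetical on $s=0$: there the implication does not become trivial, and in fact the stated equivalence can fail (for a cyclic loxodromic group with fixed point $\zeta$, the Dirac mass at $\zeta$ is a $0$-conformal measure essentially supported on $G(\zeta)=\{\zeta\}$ with $\P_{\mbox{\tiny{red}}}(\zeta,0)=1<\infty$, yet $j(h,\zeta)\neq 1$), so the proposition should simply be read with $s\neq 0$; note, however, that the paper's own proof carries the same implicit restriction, since its exclusion of loxodromic fixed points also rests on $j(h,\zeta)^s\neq 1$.
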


\begin{proof}
Without loss of generality we may assume that $G$ has no elliptic elements. This follows mainly since $j(h, \zeta) = 1$ for any fixed point $\zeta$ of an
elliptic element $h\in G$, and since $\mathrm{Stab}_G(\zeta)$ has a subgroup of finite index with no elliptic elements. The details are  left to the reader.

If $j(h,\zeta)=1$, for each $h \in \mathrm{Stab}_G(\zeta)$, and if the reduced
horosperical Poincar\'e series converges, then $\mu_\zeta$ is a purely
atomic conformal measure. Here, $\mu_\zeta$ is defined as prior to
Lemma~\ref{bounded-parabolic}. This gives one direction of the equivalence.
For the reverse direction, assume that there exists a purely atomic conformal measure
$\mu$ essentially supported on $G(\zeta)$.
Let us split the argument into the two cases: $\mathrm{Stab}_G(\zeta)$ is
 trivial and $\mathrm{Stab}_G(\zeta)$ is non-trivial.
If $\mathrm{Stab}_G(\zeta)$ is trivial,
then the $s$-conformality of $\mu$ implies that 
$\mu(\{g\zeta\})=j(g,\zeta)^s \mu(\{\zeta\})$. Thus,
$\mu = \sum_{g \in G} j(g,\zeta)^s \, \mu(\{\zeta\}) \, 
\hbox{\bf 1}_{g(\zeta)}$,
and therefore, since $\mu$ is a probability measure,
$$
\sum_{g \in G} j(g,\zeta)^s = 1/\mu(\{\zeta\}) < \infty.
$$
If $\mathrm{Stab}_G(\zeta)$ is non-trivial, then $\zeta$ is either a parabolic
or a loxodromic fixed point.
If $\zeta$ is a loxodromic fixed point of $G$, then
a conformal measure can not an atom at $\zeta$. This follows, since 
$j(h, \zeta) \neq 1$ for each loxodromic element $h \in G$ fixing
$\zeta$.
Therefore, $\zeta$ has to be a parabolic fixed point. In this situation we have that $j(h,\zeta)=1$, for each $h \in \mathrm{Stab}_G(\zeta)$.
Argueing similar as in the trivial stabiliser case,  it  follows  that  the reduced horospherical Poincar\'e series converges.
\end{proof}

Combining Theorem~\ref{basic1} and Proposition~\ref{pointmass}, 
the statement of Theorem~\ref{basic1} can be extended to the case 
in which $\mathrm{Stab}_G(\zeta)$ is non-trivial.
The proof is straightforward and left to the reader.

\begin{theorem}
\label{general}
For a Kleinian group $G$,
let $\left(z_{n}\right)$ be an ending sequence converging to
$\zeta \in D(G)$, and let $\mu$ be the associated $s$-conformal
ending measure. Then $\mu$ has an atom at $\zeta$ if and only if
$$
\mbox{$j(h,\zeta) = 1$ for each $h \in \mathrm{Stab}_G(\zeta)$, and }
  \P_{\mbox{\tiny{red}}}(\zeta,s)<\infty.
$$
In case the latter is satisfied, we then in particular have that
$\mu$ coincides with the purely atomic $s$-conformal measure
$$
\mu_\zeta = \frac{1}{\P_{\mbox{\tiny{red}}}(\zeta,s)} \sum_{g \in G/\mathrm{Stab}_G(\zeta)}
j(g,\zeta)^s \;
\hbox{\bf 1}_{g(\zeta)}
$$
which is essentially supported on $G(\zeta)$.
\end{theorem}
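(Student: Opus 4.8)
Write $\Gamma=\mathrm{Stab}_G(\zeta)$. The plan is to run the argument of Theorem~\ref{basic1} relative to the coset space $G/\Gamma$ and to combine it with the elementary computation behind Proposition~\ref{pointmass}. I would first reduce, exactly as in the proof of Proposition~\ref{pointmass}, to the case that $G$ has no elliptic elements; then for $\zeta\in D(G)$ either $\Gamma$ is trivial or $\zeta$ is a parabolic fixed point of $G$ (a loxodromic fixed point lies in $L_r(G)$, which is disjoint from $L_J(G)$, and $\zeta\in\Omega(G)$ forces $\Gamma$ trivial). If $\Gamma$ is trivial the hypothesis reads $\P(\zeta,s)<\infty$ and the statement is Theorem~\ref{basic1}, so assume henceforth that $\zeta$ is a parabolic fixed point; then $j(h,\zeta)=e^{-d_\zeta(h^{-1}(0),0)}=1$ for every $h\in\Gamma$ automatically. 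For the implication ``$\mu$ has an atom at $\zeta\ \Rightarrow$ the two conditions'' I would argue as in the reverse direction of the proof of Proposition~\ref{pointmass}: being an ending measure, $\mu$ is $s$-conformal, so $\mu(\{g(\zeta)\})=j(g,\zeta)^{s}\mu(\{\zeta\})$ for all $g\in G$; taking $g=h\in\Gamma$ gives $j(h,\zeta)=1$, and then $\mu(G(\zeta))=\mu(\{\zeta\})\sum_{g\in G/\Gamma}j(g,\zeta)^{s}=\mu(\{\zeta\})\,\P_{\mbox{\tiny{red}}}(\zeta,s)\le 1$ forces $\P_{\mbox{\tiny{red}}}(\zeta,s)<\infty$.

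For the converse I would show directly that $\mu=\mu_\zeta$. Put $c_n=k(z_n,\zeta)$; since $(z_n)$ stays within bounded hyperbolic distance of a ray towards the parabolic point $\zeta$, one has $c_n\to\infty$, so the horosphere $h_\zeta(c_n)$ through $z_n$ (a sphere of Euclidean diameter $2/(1+c_n)$ tangent to $\S$ at $\zeta$) shrinks to $\{\zeta\}$. As $\Gamma$ fixes $\zeta$ parabolically it preserves every horosphere at $\zeta$, so $\Gamma(z_n)\subset h_\zeta(c_n)$, and hence $\Gamma(z_n)$ — and likewise $\{h(z_n)^{*}:h\in\Gamma\}$ — tends to $\{\zeta\}$ uniformly in $h\in\Gamma$. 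Consequently, for each fixed coset $\gamma$ one gets $j(\gamma,h(z_n))\to j(\gamma,\zeta)$ uniformly in $h$, whence
\[
\sigma_n(\gamma):=\sum_{h\in\Gamma}j(\gamma h,z_n)^{s}=\sum_{h\in\Gamma}j(\gamma,h(z_n))^{s}\,j(h,z_n)^{s}\ \sim\ j(\gamma,\zeta)^{s}\,\sigma_n(e)\qquad(n\to\infty).
\]
Then I would mimic the proof of Theorem~\ref{basic1} with the compact set $\widetilde K:=\{\zeta\}\cup\bigcup_n\Gamma(z_n)$ (it is compact, and $\widetilde K\cap\S=\{\zeta\}$, since each $\Gamma(z_n)\subset h_\zeta(c_n)\subset\D$ and accumulates only at $\zeta$): a Dirichlet-domain argument — distinct $G$-translates of the fundamental domain meet only along boundary and the $z_n$ are interior — shows that the translates $\gamma(\widetilde K)$, $\gamma\in G/\Gamma$, are pairwise disjoint and that $\mu_n(\gamma(\widetilde K))=m_n(\gamma):=\sigma_n(\gamma)/P(z_n,s)$, so $\mu(\gamma(\widetilde K))\ge\limsup_n m_n(\gamma)$ by weak convergence.

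The one genuinely non-routine step is to show that, under the hypothesis $\P_{\mbox{\tiny{red}}}(\zeta,s)<\infty$, no mass escapes to infinity in the coset parameter, i.e.
\[
\frac{P(z_n,s)}{\sigma_n(e)}=\sum_{\gamma\in G/\Gamma}\frac{\sigma_n(\gamma)}{\sigma_n(e)}\ \longrightarrow\ \sum_{\gamma\in G/\Gamma}j(\gamma,\zeta)^{s}=\P_{\mbox{\tiny{red}}}(\zeta,s)\qquad(n\to\infty),
\]
so that $m_n(\gamma)\to j(\gamma,\zeta)^{s}/\P_{\mbox{\tiny{red}}}(\zeta,s)$ for every $\gamma$. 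Granting this, summing the inequalities $\mu(\gamma(\widetilde K))\ge\lim_n m_n(\gamma)=j(\gamma,\zeta)^{s}/\P_{\mbox{\tiny{red}}}(\zeta,s)$ over $\gamma$ gives $1\ge\sum_\gamma\mu(\gamma(\widetilde K))\ge\sum_\gamma j(\gamma,\zeta)^{s}/\P_{\mbox{\tiny{red}}}(\zeta,s)=1$, so equality holds throughout; in particular $\mu$ is carried by $\bigsqcup_\gamma\gamma(\widetilde K)$ with $\mu(\gamma(\widetilde K))=j(\gamma,\zeta)^{s}/\P_{\mbox{\tiny{red}}}(\zeta,s)$. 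Finally, since $\zeta\in L_J(G)$ the ending measure $\mu$ is supported on $L(G)\subset\S$ (as shown at the end of Section~\ref{alt-constr}), so $\mu$ gives no mass to $\D$ and $\mu(\gamma(\widetilde K))=\mu(\gamma(\widetilde K)\cap\S)=\mu(\{\gamma(\zeta)\})$. Thus $\mu(\{g(\zeta)\})=j(g,\zeta)^{s}/\P_{\mbox{\tiny{red}}}(\zeta,s)$ for all $g\in G$, i.e. $\mu=\mu_\zeta$, and this measure has an atom at $\zeta$.

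The hard part is therefore exactly the displayed no-escape-of-mass statement: each summand $\sigma_n(\gamma)/\sigma_n(e)$ converges to $j(\gamma,\zeta)^{s}$, but without a summable majorant independent of $n$ the total could a priori blow up (which is precisely what happens when $\P_{\mbox{\tiny{red}}}(\zeta,s)=\infty$), so what is needed is a dominated-convergence argument for the family of probability vectors $\big(m_n(\gamma)\big)_{\gamma}$ on $G/\Gamma$. To produce the majorant I would normalise $\zeta$ to $\infty$ in the upper half-space model: there $\Gamma$ acts by Euclidean isometries preserving the height, $(z_n)$ tends to $\infty$ vertically, and for $\gamma\notin\Gamma$ the pole $\gamma^{-1}(\infty)$ sits at finite height, so $j(\gamma,w)\to j(\gamma,\infty)$ as $\mathrm{height}(w)\to\infty$ at a rate controlled by $\mathrm{height}(w)$ relative to $\mathrm{height}(\gamma^{-1}(0))=j(\gamma,\infty)$, uniformly in the horizontal coordinate of $w$; summability of $\P_{\mbox{\tiny{red}}}(\zeta,s)=\sum_\gamma j(\gamma,\infty)^{s}$ then controls the tail and lets the dominated-convergence argument go through. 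Everything else — the elliptic-free reduction, the disjointness of the sets $\gamma(\widetilde K)$, and the identification of $\mu_\zeta$ as the unique $s$-conformal probability measure carried by $G(\zeta)$ — is bookkeeping of the kind already carried out in the proofs of Theorem~\ref{basic1} and Proposition~\ref{pointmass}.
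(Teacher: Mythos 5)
Your forward direction and your bookkeeping are fine, and in fact more detailed than the paper, which only says that Theorem~\ref{general} follows by combining Theorem~\ref{basic1} with Proposition~\ref{pointmass} and leaves the proof to the reader: the conformality computation $\mu(\{h\zeta\})=j(h,\zeta)^s\mu(\{\zeta\})$ forcing $j\equiv 1$ on $\mathrm{Stab}_G(\zeta)$ and $\P_{\mbox{\tiny{red}}}(\zeta,s)\le 1/\mu(\{\zeta\})$ is exactly the argument of Proposition~\ref{pointmass}; the compact set $\widetilde K$, the disjointness of its coset translates, the identity $\mu_n(\gamma(\widetilde K))=\sigma_n(\gamma)/P(z_n,s)$, and the pointwise convergence $\sigma_n(\gamma)/\sigma_n(e)\to j(\gamma,\zeta)^s$ (valid because $\Gamma(z_n)$ lies on the horosphere through $z_n$, which shrinks to $\zeta$) are all correct, and you correctly identify that everything hinges on $P(z_n,s)/\sigma_n(e)\to\P_{\mbox{\tiny{red}}}(\zeta,s)$.

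The gap is precisely at that step, and the justification you sketch for it is false. Normalise $\zeta=\infty$ in the upper half-space model, let $o$ be the image of the origin, and write $\gamma^{-1}(o)=(x_\gamma,t_\gamma)$, so $j(\gamma,\infty)\asymp t_\gamma$. For $w=(y,u)$ with $u$ large compared to $t_\gamma$ one has $j(\gamma,w)/j(\gamma,\infty)\asymp\bigl(|y|^2+u^2\bigr)/\bigl(|x_\gamma-y|^2+u^2\bigr)$, which is \emph{not} close to $1$ uniformly in the horizontal coordinate $y$: taking $y$ near $x_\gamma$ with $|x_\gamma|\gg u$ makes the ratio of order $|x_\gamma|^2/u^2$. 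Since the $\Gamma$-orbit of $z_n$ sweeps out a lattice-like subset of the whole horosphere at height $u_n$, and since (as $G$ is non-elementary) there are orbit points $\gamma^{-1}(o)$ with $|x_\gamma|$ arbitrarily large at bounded height, for every $n$ there are infinitely many cosets $\gamma$ with $\sup_{h\in\Gamma}j(\gamma,h(z_n))\gg j(\gamma,\zeta)$; hence no majorant of the form $\sup_{n,h}j(\gamma,h(z_n))^s\le C\,j(\gamma,\zeta)^s$ exists, and your claimed ``uniformity in the horizontal coordinate'' cannot deliver the domination. What is true, and what your own weighted-average formulation of $\sigma_n(\gamma)/\sigma_n(e)$ is designed for, is the weighted bound $\sigma_n(\gamma)/\sigma_n(e)\le C\,j(\gamma,\zeta)^s$: the offending $h$ carry weight $j(h,z_n)^s\asymp\bigl(u_n^2/(|\bar h(y_n)|^2+u_n^2)\bigr)^s$, which is small exactly where $j(\gamma,h(z_n))$ is large. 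Concretely, after passing (Bieberbach) to a finite-index translation lattice of rank $k$ in $\Gamma$, the bound reduces to the shifted orbit-sum comparison $\sum_h\bigl(|x_\gamma-\bar h(y_n)|^2+u_n^2\bigr)^{-s}\le C\sum_h\bigl(|\bar h(y_n)|^2+u_n^2\bigr)^{-s}$, both sides being $\asymp u_n^{\,k-2s}$ uniformly in $x_\gamma$ (note $2s>k$ because $P(z,s)<\infty$); with this majorant dominated convergence on $G/\Gamma$ gives the no-escape-of-mass statement and your argument closes. As written, however, the decisive estimate rests on an incorrect uniform convergence claim, so the proof is incomplete at its crucial point. (A minor further remark: the ``reduction to the case without elliptic elements'' is not literally available here, since the ending measure is built from $G$ itself rather than from a finite-index subgroup; but finite stabilisers can be treated directly via Lemma~\ref{convergence2}(iii), so that issue is only bookkeeping.)
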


Let us also briefly mention another way to produce purely atomic conformal
ending measures. For this, let us assume that the end limit set associated
to some end of a hyperbolic manifold $M_G=\D/G$ is countable,
for some Kleinian group $G$. Here, the notions of end and end limit set
are adopted from \cite{AFT} (see also Section~\ref{pre}).
Theorem~4.4 in \cite{AFT} then ensures that the type of ending measure
constructed in the previous section is essentially supported on the
corresponding end limit set, and hence, this measure is necessarily
purely atomic. Recall the basic definitions concerning ends of hyperbolic manifolds given in Section~\ref{pre}, in particular the notion of end limit set of $G$ associated with some end $E$ of $M_G$.

\begin{proposition}
\label{alternative-atomic}
Let $G$ be a Kleinian group such that $M_G$ has an end $E$.
Furthermore, assume that the end limit set associated with $E$
is countable.
Then every conformal ending measure obtained from any ending
sequence converging to any end limit point of any end
$E$ is purely atomic.
\end{proposition}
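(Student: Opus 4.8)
The plan is to invoke directly the two ingredients that the excerpt has already set up: (1) Theorem~4.4 of \cite{AFT}, which tells us that an $s$-conformal ending measure $\mu$ obtained from an ending sequence converging to an end limit point of $E$ is essentially supported on the end limit set of $E$; and (2) the hypothesis that this end limit set is countable. Combining these, $\mu$ is a Borel probability measure whose essential support is contained in a countable set. I would then observe that any Borel measure carried by a countable set is automatically purely atomic: if $A$ is the countable essential support, then $\mu(\overline{\D} \setminus A) = 0$, so $\mu = \sum_{a \in A} \mu(\{a\}) \, {\bf 1}_a$, which is a (countable) sum of point masses, hence purely atomic. This is the entire argument, so the proof will be short.

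More carefully, I would first recall from Section~\ref{alt-constr} that the ending measure $\mu$ is the weak limit of the point masses $\mu_n = P(z_n,s)^{-1}\sum_{g \in G} j(g,z_n)^s \, {\bf 1}_{g(z_n)}$, where $(z_n)$ is the given ending sequence converging to some end limit point $\zeta$ of some end $E^0_i$ of $E$. For such a $\zeta$, the result \cite[Theorem~4.4]{AFT} applies and yields that $\mu$ is essentially supported on the end limit set of $E$; I would cite this verbatim as the key input, noting that the definitions of end, end group, and end limit set used here are precisely those of \cite{AFT} as reproduced in Section~\ref{pre}. Then, letting $\Lambda$ denote the end limit set of $E$, the hypothesis gives $\mu(\overline{\D} \setminus \Lambda) = 0$ with $\Lambda$ countable, and $\sigma$-additivity of $\mu$ over the (countable) points of $\Lambda$ finishes the argument: $\mu(\Lambda) = \sum_{\xi \in \Lambda} \mu(\{\xi\}) = 1$, so $\mu$ is the countable sum $\sum_{\xi \in \Lambda} \mu(\{\xi\}) \, {\bf 1}_\xi$ of Dirac masses, i.e.\ purely atomic.

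There is essentially no obstacle here beyond correctly quoting \cite[Theorem~4.4]{AFT}; the only point requiring a word of care is to make sure the theorem from \cite{AFT} applies to \emph{every} ending sequence converging to \emph{any} end limit point of \emph{any} component $E^0_i$ (not merely to one distinguished end), which is why the statement of Proposition~\ref{alternative-atomic} is phrased with those universal quantifiers. Since the end groups $G_i$ are all conjugate and the end limit set of $G$ associated with $E$ is by definition the union of the end limit points over all $i \in I$, countability of this union is exactly the hypothesis, and it is preserved under the conjugations relating the $E^0_i$; so the conclusion holds uniformly. I would also remark, for completeness, that no assumption on $s$ (such as $s \geq \delta(G)$) is needed beyond $P(z,s) < \infty$, which is implicit in the very definition of an ending measure.
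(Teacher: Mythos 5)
Your proposal is correct and follows exactly the argument the paper gives: cite \cite[Theorem~4.4]{AFT} to conclude the ending measure is essentially supported on the end limit set, then use countability of that set together with $\sigma$-additivity to conclude the measure is purely atomic. Nothing is missing, and your added remarks on the universal quantifiers and on $s$ are consistent with the paper's setup.
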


Bishop \cite{bi} improved a classical result of Beardon and Maskit
\cite{BeMa}, by showing that a
Kleinian group $G$
acting on hyperbolic 3-space is geometrically finite if and only if
$L(G) \setminus L_r(G)$ is countable (possibly empty). It is not
difficult to see that this result continues to hold for Fuchsian groups
acting on hyperbolic 2-space.
Then, a combination of this result of Bishop and
Proposition~\ref{alternative-atomic} strongly supports
the following conjecture.
(Note that this conjecture is certainly true in two and three dimensions.)

\begin{conjecture*}
The end limit set of an end is countable if and only if it consists of
finitely many orbits of parabolic fixed points.
\end{conjecture*}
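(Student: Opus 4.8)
The plan is to prove the two implications separately. The reverse implication is routine and dimension-free; the forward implication is the substantial one, and I would route it through the end group and Bishop's theorem, which confines a complete argument to hyperbolic dimensions two and three — precisely the range in which the statement is asserted to hold.

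For the reverse implication, suppose the end limit set consists of finitely many $G$-orbits of parabolic fixed points. Since $G$ is a discrete, hence countable, subgroup of $\Con(N)$, each orbit $G(p)$ is countable, and a finite union of countable sets is countable; thus the end limit set is countable. No geometry beyond countability of $G$ enters here. For the forward implication I would fix a single component $E^0_i$ of the lift of $E$ and set $H := G_i = \mathrm{Stab}_G(E^0_i)$, regarded as a Kleinian group in its own right. By the discussion in Section~\ref{pre}, the end limit points of $E^0_i$ lie in $L(H)\setminus L_r(G)$, and since $H\le G$ gives $L_r(H)\subseteq L_r(G)$, every end limit point of $E^0_i$ is a non-radial limit point of $H$. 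Granting for the moment that a countable end limit set forces $L(H)\setminus L_r(H)$ to be countable, Bishop's theorem \cite{bi} (valid in hyperbolic dimensions two and three, i.e.\ $N=1,2$) applies to $H$ and yields that $H$ is geometrically finite. By the Beardon--Maskit characterisation \cite{BeMa}, one then has $L(H)=L_r(H)\cup P$, where $P$ is the set of bounded parabolic fixed points of $H$, and $P$ consists of finitely many $H$-orbits. Hence the end limit points of $E^0_i$, being non-radial for $H$, all lie in $P$ and form at most finitely many $H$-orbits of parabolic fixed points. Since a parabolic element of $H\le G$ is parabolic in $G$ with the same fixed point, and since the components $E^0_j$ are permuted by $G$ with conjugate stabilisers, assembling the contributions of all $j\in I$ shows that the full end limit set of $E$ is a $G$-invariant set consisting of finitely many $G$-orbits of parabolic fixed points.

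The hard part is the transfer of countability from the end limit set to the non-radial limit set $L(H)\setminus L_r(H)$ of the end group, and the reliance on Bishop's theorem. The obstruction to the transfer is the gap between $L_r(H)$ and $L_r(G)\cap L(H)$: a point of $L(H)$ may fail to be radial for $H$ yet be radial for the larger group $G$, and such a point is then a non-radial point of $H$ that is excluded from the end limit set, so countability of the end limit set does not formally entail countability of $L(H)\setminus L_r(H)$. To close this gap I would try to prove the sharper statement that the end limit points of $E^0_i$ are in fact \emph{all} of $L(H)\setminus L_r(H)$, by showing that every limit point of $H$ which is non-radial for $H$ is an endpoint of $E^0_i$; this amounts to controlling how rays that fail to recur under $H$ escape within the region $E^0_i$, and it is where the end geometry, rather than soft countability, must be used. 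Once such an equality (or even a bound on the excluded set by countably many orbits) is established in dimensions two and three, the contrapositive of Bishop's theorem — a geometrically infinite $H$ has uncountable $L(H)\setminus L_r(H)$ — delivers the needed countability transfer. What blocks the statement in general is exactly the unavailability of any Bishop-type theorem relating countability of the non-radial limit set to geometric finiteness for $N\ge 3$ (hyperbolic dimension $\ge 4$), and this is why the assertion remains a conjecture outside the low-dimensional case.
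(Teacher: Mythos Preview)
The statement you are attempting to prove is labelled a \emph{Conjecture} in the paper, and the paper offers no proof. The only remark is the parenthetical note that the conjecture ``is certainly true in two and three dimensions,'' with Bishop's theorem \cite{bi} cited as the evident ingredient, but no argument is written out. There is therefore no paper proof to compare your proposal against.

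Your sketch is in the spirit of what the authors appear to have in mind: route the forward implication through the end group $H=G_i$ and invoke Bishop's theorem to force geometric finiteness of $H$ in low dimensions, then read off the Beardon--Maskit structure. Your reverse implication is correct and routine. However, you yourself flag a gap in the forward direction, and it is genuine: countability of the end limit set of $E^0_i$ does not formally yield countability of $L(H)\setminus L_r(H)$, since the end limit set lies inside $L(H)\setminus L_r(G)$, which can be strictly smaller than $L(H)\setminus L_r(H)$, and moreover not every point of $L(H)\setminus L_r(G)$ need satisfy the endpoint condition for $E^0_i$. Closing that gap requires a geometric argument --- for instance, showing that every non-radial limit point of $H$ is already an endpoint of $E^0_i$, or at least that the defect consists of countably many orbits --- which neither you nor the paper supply. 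Your identification of this obstruction, and of the dimension restriction coming from Bishop's theorem, is an accurate account of why the statement is recorded as a conjecture rather than a theorem.
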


\begin{remark}
Note that if the latter conjecture holds, then this
would imply that
the class of examples mentioned prior to Proposition~\ref{pointmass}
represents the only class of purely atomic conformal ending measures
emerging from Proposition~\ref{alternative-atomic}.
\end{remark}

\section{Conformal
ending measures and the big horospherical limit set}
\label{big-horo-atoms}

In this section we show that if $\zeta$ is not fixed by any
parabolic element of a Kleinian group $G$ and if $\zeta$ is
an atom of some conformal ending measure for
$G$, then $\zeta$ lies in the dissipative part of the action of $G$ on
$\S$. Moreover, we will see that if the dissipative part of $L(G)$ has positive mass for some conformal measure, then there exists an atomic conformal ending measure for $G$.

Recall that a Kleinian group $G$ is said to act {\em dissipatively} on some measurable $G$-invariant set $A$, if there exists a measurable
set $E \subset A$ such that $E \cap G(\zeta)$ is a singleton, for each 
$\zeta \in A$. Note that the dissipative
action is defined independently of conformal measures.
Let $L_H(G)$ be the big horospherical limit set of $G$ and consider
the complement $\S \sm L_H(G)$,  which is clearly $G$-invariant and
 contained in the big Dirichlet set $D(G)=\Omega(G) \cup L_J(G)$.
In \cite{Tu} Tukia showed that $G$ acts dissipatively on $\S \sm L_H(G)$.
Moreover, in \cite{Tu} it was also shown that if $G$ acts dissipatively
on a $G$-invariant set $A$, then $A$ is contained in $\S \sm L_H(G)$
up to parabolic fixed points of $G$ and up to sets of measure zero, for any conformal measure for $G$.
In this sense, we can say that $\S \sm L_H(G)$ is the maximal subset of 
$\S$ on which $G$ acts dissipatively, and therefore we can say that 
$\S \sm L_H(G)$ represents the dissipative part of the action 
of $G$ on $\S$.

\begin{proposition}
\label{prep1}
For a Kleinian group $G$ we have that if $\zeta \in L_H(G)$
is not a parabolic fixed point of $G$, then $\zeta$ is not an atom
of $\mu$, for any $s$-conformal
measure $\mu$ for $G$.
\end{proposition}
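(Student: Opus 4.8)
The plan is to argue by contradiction. Suppose $\zeta \in L_H(G)$ is not a parabolic fixed point of $G$, and suppose that some $s$-conformal measure $\mu$ for $G$ has an atom at $\zeta$, say $\mu(\{\zeta\}) = a > 0$. First I would deal with the stabiliser $\mathrm{Stab}_G(\zeta)$: since $\zeta$ is not a parabolic fixed point, $\mathrm{Stab}_G(\zeta)$ contains no parabolic elements fixing $\zeta$; if it contained a loxodromic element $h$ fixing $\zeta$ then $j(h,\zeta)\neq 1$, and the conformality relation $\mu(\{\zeta\}) = \mu(h^{-1}(\{\zeta\})) = j(h,\zeta)^s \mu(\{\zeta\})$ would force $\mu(\{\zeta\}) = 0$, a contradiction. (Elliptic stabiliser elements satisfy $j(h,\zeta)=1$ and cause no problem, exactly as in the proof of Proposition~\ref{pointmass}; as there, one may pass to a finite-index subgroup without elliptics.) So we may assume $\mathrm{Stab}_G(\zeta)$ is trivial, whence the orbit points $g(\zeta)$, $g \in G$, are distinct, and $s$-conformality gives $\mu(\{g(\zeta)\}) = j(g,\zeta)^s\, a$ for every $g \in G$.

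The second step is to use $\zeta \in L_H(G)$ to contradict $\mu$ being a probability measure. Since $\sum_{g \in G} \mu(\{g(\zeta)\}) \leq 1$, we get
$$
a \sum_{g \in G} j(g,\zeta)^s \leq 1,
$$
so the horospherical Poincar\'e series $\P(\zeta,s) = \sum_{g \in G} j(g,\zeta)^s$ converges. But $\zeta \in L_H(G)$ means there is a constant $c>0$ such that the orbit $G(0)$ accumulates at $\zeta$ inside the horoball $H_\zeta(c)$; that is, there are infinitely many $g \in G$ with $g^{-1}(0) \in H_\zeta(c)$, equivalently $k(g^{-1}(0),\zeta) = j(g,\zeta) > c$. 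Hence $j(g,\zeta)^s > c^s$ for infinitely many $g$ (using $c < 1$, so that $c^s$ is a genuine lower bound for every $s\in\R$; if $c\geq 1$ replace $c$ by a smaller positive constant), and therefore $\P(\zeta,s) = \infty$. This contradicts the convergence just derived, completing the proof.

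The only point requiring a little care — and the one I would expect to be the main obstacle — is the reduction of the stabiliser to the trivial case and the accompanying handling of elliptic elements, since the accumulation-in-a-horoball condition and the conformality relation must be shown to survive passage to a finite-index subgroup $G'$; but this is routine, because $L_H(G') = L_H(G)$ up to the relevant points, $\zeta$ remains in $L_H(G')$, and $\mu$ restricted (or its atom) still transforms conformally under $G'$. The horospherical estimates themselves are immediate from the identity $j(g,\zeta) = k(g^{-1}(0),\zeta)$ recorded in Section~\ref{pre}, so no genuinely new computation is needed.
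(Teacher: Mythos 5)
Your proof is correct and takes essentially the same route as the paper: an atom at $\zeta$ forces convergence of the (reduced) horospherical Poincar\'e series once loxodromic stabiliser elements are excluded, while $\zeta \in L_H(G)$ gives infinitely many $g$ with $j(g,\zeta)>c$ and hence divergence, so an atom is impossible unless $\mathrm{Stab}_G(\zeta)$ is infinite with $j\equiv 1$, i.e.\ $\zeta$ is a parabolic fixed point. The paper simply packages the first half as a citation of Proposition~\ref{pointmass} (with the same glossed-over treatment of elliptic elements that you borrow), whereas you rederive it inline; the difference is purely presentational.
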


\begin{proof}
If $\zeta \in L_H(G)$, then there is a sequence  $\left(g_n^{-1}\right)$ 
of elements $g_n^{-1}\in G$ such that $\left(g_n^{-1}(0)\right)$ converges to $\zeta$ within some horoball $H_\zeta(c)$.
In this case, we have $j(g_n,\zeta) >c$, for some $c >0$ and for all
$n \in \N$. Combining this with Proposition~\ref{pointmass}, 
it follows that $\zeta$ can be an atom only if
$\mathrm{Stab}_G(\zeta)$ is infinite and  if $j(h,\zeta) = 1$, for every
$h \in \mathrm{Stab}_G(\zeta)$.
However, this is only possible if $\zeta$ is  a parabolic fixed point
of~$G$.
\end{proof}

\begin{remark}
Note that the intersection $L_J(G) \cap L_H(G)$ can contain a point
$\zeta$ which is not a parabolic fixed point.
In this case, the horospherical Poincar\'e series
$\P(\zeta,s)$ at $\zeta$ diverges for every exponent
$s \in \Rp$. Indeed, if it would converge, then Theorem~\ref{basic1}
guarantees the existence of
a purely atomic $s$-conformal ending measure $\mu$ with an atom at
$\zeta$, contradicting Proposition~\ref{prep1}.
An example of such a point $\zeta \in L_J(G) \cap L_H(G)$ can be given
by taking a normal subgroup $G$ of some Kleinian group $\Gamma$.
By choosing a suitable hyperbolic element $\gamma \in \Gamma$
with $G \cap \langle \gamma \rangle=\{id.\}$, we can guarantee on the one hand  that the fixed
point $\zeta$ of $\gamma$ is contained in the boundary at infinity of some
Dirichlet fundamental domain of $G$. On the other hand, by a result of   \cite{Mat} we have that  $\zeta \in L_H(G)$.
\end{remark}

The following proposition shows that if there exists a conformal
measure which gives positive mass to the dissipative part, then
there also exists a purely atomic conformal ending measure.
This result relies on Tukia's results mentioned above.

\begin{proposition}
\label{prep2}
If there exists a $s$-conformal measure $\mu$ for $G$ such that
$\mu(A \sm L_H(G))>0$, for some $G$-invariant measurable set $A$, then
there exists a purely atomic $s$-conformal ending measure $\nu$ for $G$
essentially supported on $A$ such that each atom of $\nu$ is not fixed
by any parabolic element of $G$.
\end{proposition}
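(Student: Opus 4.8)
The strategy is to extract, from the given $s$-conformal measure $\mu$ on $A$, a single point $\zeta$ in the dissipative part $A \setminus L_H(G)$ that carries positive mass in a controlled sense, and then to build an ending sequence converging to $\zeta$ whose associated ending measure $\nu$ we can show is purely atomic and supported on $G(\zeta)$. The key input is Tukia's result (quoted in the excerpt): $G$ acts dissipatively on $\S \setminus L_H(G)$, so there is a measurable fundamental set $E \subset \S \setminus L_H(G)$ meeting each orbit $G(\zeta)$, $\zeta \in \S \setminus L_H(G)$, in exactly one point. Pushing $\mu$ restricted to $A \setminus L_H(G)$ onto $E$ via the $G$-action (using $s$-conformality to keep track of the transformation factors, exactly as in the identity $\mu(g(B)) = \int_B j(g,z)^s\, d\mu$), one obtains a measure on $E$ of positive total mass. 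Since $\mu$ is $\sigma$-finite (in fact a probability measure), I can choose $\zeta_0 \in E$ that is not a parabolic fixed point of $G$ — parabolic fixed points lie in $L_H(G)$, hence are already excluded from $E$ — and with $\mu$ assigning positive mass to every neighbourhood of $\zeta_0$ within $E$, or more simply I just need one point $\zeta_0 \in E$ whose $G$-orbit carries positive $\mu$-mass of the dissipative part; such a point exists because $E$ itself has positive mass under the pushed-forward measure.

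The next step is to realise $\zeta_0$ as the limit of an ending sequence. Since $\zeta_0 \notin L_H(G)$, by definition the orbit $G(0)$ does not accumulate at $\zeta_0$ inside any horoball $H_{\zeta_0}(c)$; in particular $\zeta_0$ is not a radial limit point, and one checks that $\zeta_0 \in D(G) = \Omega(G) \cup L_J(G)$, the boundary at infinity of a Dirichlet fundamental domain $F$ (this is the statement $D(G) \cup L_H(G) = \S$ recalled in Section~\ref{pre}, together with the fact that $\zeta_0 \notin L_H(G)$). So there is a geodesic ray toward $\zeta_0$ lying in $\overline F$, and picking points $z_n$ along it at hyperbolic distance $n$ from $0$ gives a legitimate ending sequence in the sense of Section~\ref{alt-constr}. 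Let $\nu$ be the associated $s$-conformal ending measure (passing to a subsequence for weak convergence via Helly--Bray).

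To show $\nu$ is purely atomic and essentially supported on $G(\zeta_0)$, I would run the argument of Theorem~\ref{basic1}: the point is to show $\nu(G(\zeta_0)) = 1$, which reduces to showing $\lim_{n\to\infty} P(z_n, s) = \P(\zeta_0, s)$ (or the reduced version if $\mathrm{Stab}_G(\zeta_0)$ is infinite) together with convergence of that horospherical series. Convergence of $\P(\zeta_0, s)$ — equivalently of $\P_{\mathrm{red}}(\zeta_0,s)$ — is precisely where the dissipativity is used: because $\zeta_0$ lies in the dissipative part with a measurable fundamental set $E$, and $\mu$ is finite, the $s$-conformal transformation rule forces $\sum_{g \in G/\mathrm{Stab}_G(\zeta_0)} j(g,\zeta_0)^s < \infty$; indeed, comparing $\mu$-mass of disjoint translates $g(E)$ near $\zeta_0$ (or rather, near the point of $E$ on the orbit of $\zeta_0$) gives $\sum_g j(g,\zeta_0)^s \mu(\text{piece}) \le \mu(\text{total}) < \infty$, and since the relevant piece has positive mass the series converges. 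Then Lemma~\ref{convergence2} (the case $\zeta \in \Omega(G)$, or the radial-type estimate in its proof adapted to $\zeta_0 \in D(G)$ reached along a ray in $F$) gives $P(z_n,s) \to \P_{\mathrm{red}}(\zeta_0, s)$, and the Theorem~\ref{basic1}/Proposition~\ref{pointmass} machinery identifies $\nu$ with $\mu_{\zeta_0}$, which is purely atomic with atoms on $G(\zeta_0)$; finally, $\zeta_0$ not being a parabolic fixed point, none of its translates are either, so no atom of $\nu$ is fixed by a parabolic element. Whether $\nu$ is supported on $A$ follows since $G(\zeta_0) \subset A$ by $G$-invariance of $A$ and $\zeta_0 \in A$.

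**Main obstacle.** The delicate point is the convergence of the (reduced) horospherical Poincaré series $\P_{\mathrm{red}}(\zeta_0,s)$, and more precisely extracting it cleanly from dissipativity. Tukia's fundamental set $E$ is only measurable, not open, so one cannot directly say "a small neighbourhood of $\zeta_0$ has positive mass and its translates are disjoint"; instead one must argue with the fundamental set itself: the sets $g(E)$, $g \in G/\mathrm{Stab}_G(\zeta_0)$, are pairwise disjoint (that is exactly what "fundamental set" means), and $\mu(g(E)) = \int_E j(g,z)^s\, d\mu(z)$ by conformality. If one could choose $E$ (or shrink it) so that $j(g,\cdot)^s$ is comparable on $E$ to $j(g,\zeta_0)^s$ uniformly in $g$ — which is plausible because $\zeta_0 \in \Omega(G)$ would make $j(g,\cdot)$ nearly constant near $\zeta_0$, but is less clear when $\zeta_0 \in L_J(G)$ — then $\sum_g j(g,\zeta_0)^s \asymp \sum_g \mu(g(E))/\mu(E) \le 1/\mu(E) < \infty$. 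Handling the case $\zeta_0 \in L_J(G)$, where $\zeta_0$ is a limit point and the geometry near it is not uniformly controlled, is where care is needed; one likely replaces $E$ by $E \cap B$ for a small ball $B$ around $\zeta_0$ with $\mu(E \cap B) > 0$ (possible since $\mu$ gives the dissipative part positive mass, though one may first need to move $\zeta_0$ to a $\mu$-density point of $E$ within $\S \setminus L_H(G)$), and then use that $j(g, \cdot)$ has bounded distortion on $B$ because the $g^{-1}(0)$ stay away from $B$ — which is exactly the non-accumulation guaranteed by $B \cap L_H(G) = \emptyset$ after shrinking. This distortion-control step, tying the abstract dissipativity to a concrete summability estimate, is the crux of the proof.
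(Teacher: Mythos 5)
Your overall architecture (Tukia's fundamental set $E\subset\S\sm L_H(G)$, conformality to turn disjointness of translates into a summability statement, then Theorem~\ref{general}/Proposition~\ref{pointmass} to produce the atomic ending measure, and the observation that points of $E$ cannot be parabolic fixed points) matches the paper, but there is a genuine gap at exactly the step you flag as the crux: you fix a point $\zeta_0\in E$ \emph{in advance} and then try to prove $\P(\zeta_0,s)<\infty$ by a bounded-distortion comparison of $j(g,\cdot)$ on a small ball $B$ around $\zeta_0$ with $j(g,\zeta_0)$. This fails in the relevant case $\zeta_0\in L_J(G)$: one cannot shrink $B$ so that $B\cap L_H(G)=\emptyset$, since $L_r(G)\subseteq L_H(G)$ and radial limit points are dense in $L(G)$, so every neighbourhood of a J{\o}rgensen limit point meets $L_H(G)$; moreover, the orbit points $g^{-1}(0)$ \emph{do} accumulate at $\zeta_0$ (it is a limit point — $\zeta_0\notin L_H(G)$ only rules out accumulation inside horoballs), so ``the $g^{-1}(0)$ stay away from $B$'' is false and no uniform comparability of $j(g,\cdot)^s$ on a neighbourhood of $\zeta_0$ is available. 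A second, smaller defect: your claim that some single orbit carries positive $\mu$-mass does not follow from $\mu(E)>0$ (orbits are countable and $\mu$ may be non-atomic on the dissipative part), and it is also not needed.

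The paper's proof sidesteps the distortion problem by \emph{not} choosing the point first. One decomposes $E=\bigcup_n E_n$ with $E_n=\{\zeta\in E:\#\mathrm{Stab}_G(\zeta)\leq n\}$, picks $n$ with $\mu(E_n)>0$, and uses the fundamental-set property to bound the multiplicity of the cover $\{g(E_n)\}_{g\in G}$ by $n$ (this also handles the nontrivial finite stabilisers that your quotient by $\mathrm{Stab}_G(\zeta_0)$ glosses over), giving
$$
1\;\geq\;\mu\Bigl(\bigcup_{g\in G}g(E_n)\Bigr)\;\geq\;\frac{1}{n}\sum_{g\in G}\mu(g(E_n))\;=\;\frac{1}{n}\int_{E_n}\P(\zeta,s)\,d\mu(\zeta).
$$
Hence $\P(\zeta,s)<\infty$ for $\mu$-almost every $\zeta\in E_n$, and one then selects such a point $\xi$ \emph{a posteriori}; since $\xi\in D(G)$ has finite stabiliser, Theorem~\ref{general} yields the purely atomic $s$-conformal ending measure $\nu$ supported on $G(\xi)\subset A$, with no parabolic atoms. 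If you replace your fixed-$\zeta_0$ distortion argument by this Fubini-type ``integrate the horospherical Poincar\'e series over $E_n$ and choose a good point afterwards'' step, your proof closes; as written, the summability of $\P_{\mbox{\tiny{red}}}(\zeta_0,s)$ at your pre-specified point is not established.
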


\begin{proof}
Let $A$ be $G$-invariant and  measurable such that $\mu(A \sm L_H(G))>0$.
By \cite{Tu}, we then have that there exists a measurable set
$E \subset A \sm L_H(G)$ such that $G(\zeta)$
has exactly one point in $E$, for each $\zeta \in A \sm L_H(G)$.
Using $\mu(A \sm L_H(G))>0$, it follows that $\mu(E)>0$.
Define a measurable subset $E_{n}$, for each positive integer $n$, by
$$
E_n=\{\zeta \in E : \#\mathrm{Stab}_G(\zeta) \leq n\}.
$$
Since $E=\bigcup_{n=1}^\infty E_n$ and $\mu(E)>0$,
there exists some $n$ such that $\mu(E_n)>0$. Hence,
$$
1 \geq \mu(A \sm L_H(G)) \geq \mu \left( \bigcup_{g \in G}g(E_n) \right)
\geq \frac{1}{n} \sum_{g \in G} \mu(g(E_n)).
$$
Since $\mu$ is an $s$-conformal measure for $G$, the final term
in the latter chain of inequalities is
equal to
$$
\frac{1}{n}\sum_{g \in G} \int_{E_n} j(g,\zeta)^s d\mu(\zeta) =
\frac{1}{n}\int_{E_n} \P(\zeta,s) d\mu(\zeta).
$$
This implies that $\P(\zeta,s)$ converges, for
$\mu$-almost every $\zeta \in E_n$. In particular, there exists
some $\xi \in E_n$ such that
$\P_{\mbox{\tiny{red}}}(\xi,s)<\infty$.
Since $\xi \in D(G)$, Theorem~\ref{general} shows that the measure
$$
\nu=\frac{1}{\P_{\mbox{\tiny{red}}}(\xi,s)}\sum_{g \in G/\mathrm{Stab}_G(\xi)}j(g,\xi)^s
{\bf 1}_{g(\xi)}
$$
is a purely atomic $s$-conformal ending measure.
In particular, since $\mathrm{Stab}_G(\xi)$ is finite, we have that $\xi$ is not fixed by any parabolic
element of $G$.
\end{proof}

We now summarize the two previous propositions in the following theorem.

\begin{theorem}
\label{thm5}
For a Kleinian group $G$
and for $s \geq \delta(G)$, the following are equivalent.
\begin{itemize}
\item[$(\rm i)$] Every $s$-conformal ending measure for $G$ supported on
$L(G)$ is essentially supported on $L_H(G)$.
\item[$(\rm ii)$] Every $s$-conformal ending measure for $G$ supported
on $L(G)$ can have atoms only at parabolic fixed points of $G$.
\end{itemize}
\end{theorem}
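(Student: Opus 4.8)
The plan is to prove the two implications separately, using Proposition~\ref{prep1} for one direction and Proposition~\ref{prep2} for the other. The key observation tying everything together is that $\S \sm L_H(G)$ is, up to sets of $\mu$-measure zero and up to parabolic fixed points, the maximal set on which $G$ acts dissipatively (as recalled from Tukia's work just before Proposition~\ref{prep1}); equivalently, $L_H(G)$ carries the conservative part of the action, and atoms behave very differently on the two parts.

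First I would establish $(\rm i)\Rightarrow(\rm ii)$. Assume every $s$-conformal ending measure $\mu$ supported on $L(G)$ is essentially supported on $L_H(G)$, and let $\zeta$ be an atom of such a $\mu$. Since the essential support lies in $L_H(G)$ and $\mu(\{\zeta\})>0$, the point $\zeta$ must in fact lie in $L_H(G)$ (an atom of $\mu$ cannot sit outside a set of full $\mu$-measure). Now Proposition~\ref{prep1} applies verbatim: a point of $L_H(G)$ that is an atom of an $s$-conformal measure must be a parabolic fixed point of $G$. Hence every atom of every such $\mu$ is a parabolic fixed point, which is exactly $(\rm ii)$. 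This direction is essentially immediate once one notes that an atom must lie in every essential support.

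Next I would prove the contrapositive of $(\rm ii)\Rightarrow(\rm i)$: assuming $(\rm i)$ fails, produce an $s$-conformal ending measure supported on $L(G)$ with an atom that is not a parabolic fixed point, thereby contradicting $(\rm ii)$. If $(\rm i)$ fails, there is an $s$-conformal ending measure $\mu$ supported on $L(G)$ with $\mu(L(G)\sm L_H(G))>0$. Apply Proposition~\ref{prep2} with the $G$-invariant measurable set $A=L(G)$: since $\mu(A\sm L_H(G))>0$, there exists a purely atomic $s$-conformal ending measure $\nu$ for $G$, essentially supported on $A=L(G)$, each of whose atoms is not fixed by any parabolic element of $G$. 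In particular $\nu$ is supported on $L(G)$ and has an atom which is not a parabolic fixed point, so $(\rm ii)$ fails. One small point to check here is that the measure $\nu$ produced by Proposition~\ref{prep2} is genuinely of the "ending measure" type in the sense of Section~\ref{alt-constr}; but this is already guaranteed, since $\nu$ is built via Theorem~\ref{general} from the point $\xi\in D(G)$, i.e.\ it is the ending measure associated to any ending sequence converging to $\xi$.

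The main obstacle, such as it is, is not in the logical structure but in being careful about two bookkeeping issues: (a) ensuring that "atom of $\mu$" together with "$\mu$ essentially supported on $L_H(G)$" really forces the atom into $L_H(G)$ (true because an atom has positive mass, hence cannot be disjoint from a full-measure set), and (b) invoking Proposition~\ref{prep2} with exactly the right $G$-invariant set $A$; taking $A=L(G)$ is the natural choice and makes the hypothesis $\mu(A\sm L_H(G))>0$ coincide with the negation of $(\rm i)$. The constraint $s\geq\delta(G)$ is used implicitly through Lemma~\ref{convergence1} and the earlier results, since it is the regime in which $s$-conformal ending measures can exist at all; no additional argument is needed for it here.
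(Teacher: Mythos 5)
Your proposal is correct and follows essentially the same route as the paper: Proposition~\ref{prep1} gives (i)$\Rightarrow$(ii) (the paper phrases this contrapositively, while you argue directly that an atom must lie in the full-measure set $L_H(G)$, which is equivalent), and Proposition~\ref{prep2} applied with $A=L(G)$ gives (ii)$\Rightarrow$(i) by contraposition, exactly as in the paper. Your side remark that the measure produced by Proposition~\ref{prep2} is a genuine ending measure via Theorem~\ref{general} and the point $\xi\in D(G)$ is also precisely how the paper justifies this.
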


\begin{proof}
Let $\zeta \in L(G)$ be given such that $\zeta$ is not a parabolic
fixed point of $G$. Suppose that there exists a purely atomic
$s$-conformal ending measure $\mu$ for $G$
essentially supported on $G(\zeta)$.
By Proposition~\ref{prep1}, we then have that $\zeta$ does not belong
to $L_H(G)$, and hence, $\mu$ is essentially supported on
$L(G) \sm L_H(G)$. This shows that (i) implies (ii).

For the reverse implication, suppose that there is an $s$-conformal
ending measure $\mu$ for $G$ such that $\mu(L(G) \sm L_H(G))>0$.
By  applying Proposition~\ref{prep2}  to the situation where $A=L(G)$,
it follows that there exists a purely atomic $s$-conformal ending measure
for $G$ supported on $L(G)$ whose atoms are not parabolic fixed points
of~$G$.
\end{proof}

\section{Examples of conformal ending measures}
\label{examples}

In this section we discuss three non-trivial examples of purely atomic
as well as non-atomic conformal ending measures. The first example
considers infinitely generated Schottky groups which admit purely
atomic conformal ending measures with atoms at J{\o}rgensen points.
The second example looks at normal subgroups of finitely generated
Schottky groups. We show that for groups of this type there can be an
uncountable family of mutually singular non-atomic conformal ending
measures. The third example considers a Kleinian group $\Gamma$
which admits a purely atomic $\delta(\Gamma)$-conformal ending measure
at a J{\o}rgensen point  that is not a parabolic fixed point.

\bigskip
\noindent
{\bf Example 1.} \\
{\em Atomic measures at J{\o}rgensen points of
infinitely generated Schottky groups.}

We consider a particular class of infinitely generated classical
Schottky groups (see also \cite{SU07}).
In order to define these groups, let us fix some
increasing function $\phi: \N \to \Rp$ such that
$\phi$ is compatible with the following inductive construction
of the  set
$
{\cal C}={\cal C}(\phi)=\{C_{n}^\epsilon : \epsilon \in \{+.-\},n \in \N\}
$
of pairwise disjoint open $N$-dimensional discs  $C_{n}^{\pm}$ in
$\S$ with radii $r_{n}^{\pm}$.
For ease of exposition, let us assume that
$ r_{n}:=r_{n}^+=r_{n}^- <1$, for all $n \in \N$.
With $C_{n}^{\pm,\phi}(\supset C_{n}^{\pm})$ denoting  the
disc concentric to $C_{n}^\pm$ of radius $\phi(n) r_{n}$, we now choose
the elements in $\cal C$ by way of induction as follows.
Let $C_{1}^+$ and $C_{1}^-$ be chosen such that
$C_{1}^{+,\phi} \cap C_{1}^{-,\phi} = \emptyset$.
For the inductive step assume that
$C_{1}^+,C_{1}^-,\ldots,C_{n}^+,C_{n}^-$ have been constructed, for
some $n \in  \N$.
Then choose $C_{n+1}^+$ and $C_{n+1}^-$ such that
\[
C_{n+1}^{+,\phi} \cap C_{n+1}^{-,\phi} = \emptyset
\, \hbox{ and } \, C_{n+1}^{\epsilon,\phi}
\cap C_{k}^{\epsilon,\phi} = \emptyset \, \hbox{ for all } k \in
\{1,\ldots,n\},\, \epsilon \in \{+,-\}.
\]
With these discs at hand,   we then define
hyperbolic transformations $g_n^\epsilon\in \Con(N)$, for each $n \in \N$
and $\epsilon \in \{+,-\}$, such that $g_n^-=(g_n^+)^{-1}$ and
\[
g_{n}^+ \left( \Ext  (C_{n}^+) \right) = \Int (C_{n}^-)
\, \hbox{ and }  \,  g_{n}^{-} \left( \Ext  (C_{n}^-) \right) =
\Int(C_{n}^+).
\]
Here, $\Int$ and $\Ext$ denote respectively the interior and exterior in
the topology of $\S$.
Then let $G^{\phi}$ be
the group generated by the set
$$
G_{1}^{\phi} = \{g_{n}^\epsilon: \epsilon \in \{+,-\}, n \in \N\}.
$$
One easily verifies that $\bigcap_{C\in {\cal C}}\Ext (C)$ is a
fundamental domain for the action of $G^{\phi}$ on
$\S$ (cf. \cite{Mas}, Proposition VIII.A.4), which shows in particular
that $G^{\phi}$ is a Kleinian group.
Note that the set $L_{J}(G^{\phi})$ of J{\o}rgensen limit points
contains the accumulation points of $\{C\in {\cal C}\}$.
Then let $L_{J}'(G^{\phi})$ denote the subset of $L_{J}(G^{\phi})$ given by
\[
L_{J}'(G^{\phi}) := L_{J}(G^{\phi}) \cap  \bigcap_{C_{n}^{\epsilon,\phi}
\in
{\cal C}} \Ext (C_{n}^{\epsilon,\phi}).
\]
Clearly, by construction we have that $L_{J}'(G^{\phi}) \neq \emptyset$.

\begin{proposition}
Let $s \in \R^+$ be
given, and let $\phi_{s}:\N \to [2,\infty)$
be an increasing function such that
$ \sum _{n\in \N} \left(4/ \phi_{s}(n)\right)^{2s}<1/2$.
We then have that the horospherical Poincar\'e series for
$G^{\phi_{s}}$ at $\zeta$
converges at the exponent $s$,
for each $\zeta \in L_{J}'(G^{\phi_{s}})$. Moreover,
the essential support of the associated $s$-conformal
ending measure $\mu_{\zeta}$ is equal to $G^{\phi_{s}}(\zeta)$.
\end{proposition}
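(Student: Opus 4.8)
The plan is to estimate the horospherical Poincar\'e series $\P(\zeta,s)=\sum_{g\in G^{\phi_s}}j(g,\zeta)^s$ by organising the sum according to reduced words in the free generating set $G_1^{\phi_s}$, bounding it by a convergent geometric series, and then reading off the statement about the essential support from Theorem~\ref{basic1}. For $g\neq\mathrm{id}$ I write the reduced expression $g^{-1}=g_{m_1}^{\eta_1}\cdots g_{m_k}^{\eta_k}$, so that $g=g_{m_k}^{-\eta_k}\cdots g_{m_1}^{-\eta_1}$, and set $w_1=\zeta$, $w_{i+1}=g_{m_i}^{-\eta_i}(w_i)$. The cocycle identity $j(ab,x)=j(a,b(x))\,j(b,x)$ then gives $j(g,\zeta)=\prod_{i=1}^k j(g_{m_i}^{-\eta_i},w_i)$.

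Next I would locate the $w_i$. Using that $g_n^{\epsilon}$ maps $\Ext(C_n^{\epsilon})$ onto $\Int(C_n^{-\epsilon})$, that consecutive letters of a reduced word are never mutually inverse (so the discs involved in two consecutive steps are genuinely different), and that the closures of all inflated discs $C_m^{\epsilon,\phi_s}$ are pairwise disjoint, one checks by induction on $i$ that $w_1=\zeta$ lies outside every inflated disc — this is precisely the condition defining $L_J'(G^{\phi_s})$ — and that $w_i\in\Int(C_{m_{i-1}}^{\eta_{i-1}})$ for $i\geq 2$. In both cases $w_i$ lies outside the inflated disc $C_{m_i}^{-\eta_i,\phi_s}$, whose underlying circle $\partial C_{m_i}^{-\eta_i}$ is the isometric sphere of the generator $g_{m_i}^{-\eta_i}$.

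The heart of the proof is a uniform per-generator bound: for every generator $h=g_m^{\epsilon}$ and every $\xi\in\S$ with $\xi\notin C_m^{\epsilon,\phi_s}$ one has $j(h,\xi)\leq (4/\phi_s(m))^2$. To prove it, factor $h=\sigma\circ\iota$, where $\iota$ is the Euclidean inversion in the isometric sphere of $h$, i.e.\ in the sphere orthogonal to $\S$ meeting $\S$ in $\partial C_m^{\epsilon}$, of radius $\rho_m=r_m/\sqrt{1-r_m^2}$ and centre $a_m=e_m/\sqrt{1-r_m^2}$ with $e_m$ the spherical centre of $C_m^{\epsilon}$, and where $\sigma$ is a Euclidean isometry; this factorisation is exactly what realises the pairing $\Ext(C_m^{\epsilon})\to\Int(C_m^{-\epsilon})$ and is the ``classical'' feature of the group. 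Since $j(h,\xi)=|h'(\xi)|=|\iota'(\xi)|=\rho_m^2/|\xi-a_m|^2$ for $\xi\in\S$, and since $\xi\notin C_m^{\epsilon,\phi_s}$ forces $\langle\xi,e_m\rangle\leq\sqrt{1-\phi_s(m)^2 r_m^2}$, completing the square in $|a_m|=1/\sqrt{1-r_m^2}$ yields
\[
|\xi-a_m|^2\ \geq\ \Bigl(\tfrac{1}{\sqrt{1-r_m^2}}-\sqrt{1-\phi_s(m)^2 r_m^2}\Bigr)^{2}+\phi_s(m)^2 r_m^2\ \geq\ \phi_s(m)^2 r_m^2,
\]
hence $j(h,\xi)\leq\rho_m^2/(\phi_s(m)^2 r_m^2)=1/\bigl(\phi_s(m)^2(1-r_m^2)\bigr)\leq 4/(3\phi_s(m)^2)\leq (4/\phi_s(m))^2$, where $\phi_s(m) r_m<1$ (so $r_m<1/2$) has been used. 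Therefore $j(g,\zeta)\leq\prod_{i=1}^k(4/\phi_s(m_i))^2$, and overcounting reduced words of length $k$ by all length-$k$ sequences of letters,
\[
\P(\zeta,s)=1+\sum_{g\neq\mathrm{id}}j(g,\zeta)^s\leq 1+\sum_{k\geq 1}\Bigl(2\sum_{m\in\N}(4/\phi_s(m))^{2s}\Bigr)^{\!k}=\frac{1}{1-2\sum_{m\in\N}(4/\phi_s(m))^{2s}}<\infty,
\]
which is finite precisely because $\sum_{m\in\N}(4/\phi_s(m))^{2s}<1/2$; this is the first assertion.

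For the second assertion, Lemma~\ref{convergence1} gives $s\geq\delta(G^{\phi_s})$, and $\zeta\in L_J'(G^{\phi_s})\subseteq L_J(G^{\phi_s})\subseteq D(G^{\phi_s})$; moreover $\zeta$ is fixed by no non-trivial element of $G^{\phi_s}$, for that group is free (hence torsion-free) and if a loxodromic $h$ fixed $\zeta$ then $j(h,\zeta)\neq 1$ and $\sum_{n\in\gz}j(h,\zeta)^{ns}\leq\P(\zeta,s)$ would diverge. Thus Theorem~\ref{basic1} applies (an ending sequence toward $\zeta$ exists since $\zeta\in L_J(G^{\phi_s})$) and shows that the essential support of the associated $s$-conformal ending measure equals $G^{\phi_s}(\zeta)$, with $\mu_\zeta=\P(\zeta,s)^{-1}\sum_g j(g,\zeta)^s\,{\bf 1}_{g(\zeta)}$. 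The point I expect to be most delicate is the per-generator bound — specifically the assertion that each Schottky generator factors as a Euclidean isometry composed with inversion in the sphere over the corresponding $\partial C_m^{\epsilon}$, so that $j(g_m^{\epsilon},\cdot)$ is literally the conformal factor of one inversion; once that is established, the geometric estimate above is routine and the sign-bookkeeping of the second step is elementary.
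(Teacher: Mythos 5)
Your overall architecture is the same as the paper's: decompose $\P(\zeta,s)$ over reduced words, get a uniform per-letter bound $j(g_m^{\epsilon},\xi)\le (4/\phi_s(m))^{2}$ valid whenever $\xi$ lies outside the inflated disc $C_m^{\epsilon,\phi_s}$, dominate by the geometric series with ratio $2\sum_m(4/\phi_s(m))^{2s}<1$, and then invoke Theorem~\ref{basic1}; your bookkeeping of the intermediate points $w_i$ (using reducedness and disjointness of the inflated discs) is in fact spelled out more carefully than in the paper, and your verification of the hypotheses of Theorem~\ref{basic1} (that $\zeta\in D(G^{\phi_s})$ and is fixed by no non-trivial element) is fine. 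The problem is exactly the step you flagged: you derive the per-letter bound from the claim that $g_m^{\epsilon}$ factors as a Euclidean isometry composed with the inversion in the sphere orthogonal to $\S$ through $\partial C_m^{\epsilon}$, i.e.\ that the isometric sphere of $g_m^{\epsilon}$ is that sphere, so that $j(g_m^{\epsilon},\xi)=\rho_m^2/|\xi-a_m|^2$. This is not part of the construction and is false for a general pairing map: the condition $g_n^{+}(\Ext(C_n^{+}))=\Int(C_n^{-})$ determines $g_n^{+}$ only up to pre- and post-composition with M\"obius maps preserving the half-spaces over $C_n^{+}$ and $C_n^{-}$, and such compositions move the isometric sphere away from $\partial C_n^{+}$ (its trace on $\S$ is the cap $\{|\xi-(g_n^{+})^{-1}(0)|^2=1-|(g_n^{+})^{-1}(0)|^2\}$, whose centre and radius change with the choice). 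So as written the exact identity you use is an unproved extra normalisation, and calling it ``the classical feature'' does not supply it; under that normalisation your computation is correct (indeed sharper than needed), but it proves the proposition only for specially chosen generators, whereas the statement is about $G^{\phi_s}$ with arbitrary pairing transformations.

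The paper's proof avoids this by using only two inequalities that hold for every pairing map: $|\zeta-(g_n^{\epsilon})^{-1}(0)|\ge(\phi_s(n)-1)r_n$ for $\zeta\in\Ext(C_n^{\epsilon,\phi_s})$, and $1-|g_n^{\epsilon}(0)|\le 2r_n^{2}$, which together with $\phi_s(n)\ge2$ give $j(g_n^{\epsilon},\zeta)\le 4/(\phi_s(n)-1)^2\le(4/\phi_s(n))^{2}$. The second inequality is where the geometry of the pairing enters in a choice-independent way: since $g_n^{\epsilon}$ maps the hyperbolic plane over $\partial C_n^{\epsilon}$ to the one over $\partial C_n^{-\epsilon}$ and the two discs have equal radii $r_n$, the point $g_n^{\epsilon}(0)$ lies inside the half-space over $C_n^{-\epsilon}$ at hyperbolic distance $d(0,\,\text{plane over }\partial C_n^{\epsilon})\asymp\log(2/r_n)$ from its bounding plane, which forces $1-|g_n^{\epsilon}(0)|=O(r_n^{2})$ regardless of which pairing map was chosen. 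To repair your argument you should either prove such a choice-free estimate (and then your chain-rule and geometric-series steps go through verbatim), or state explicitly that you normalise the generators so that their isometric spheres are the spheres over $\partial C_n^{\pm}$ and note why this restriction is acceptable; as it stands, the central estimate rests on a structural claim that a general $G^{\phi_s}$ need not satisfy.
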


\begin{proof}
First note that for each
$\zeta \in \Ext (C_{n}^{\epsilon,\phi_{s}})$, $g_{n}^\epsilon \in
G_{1}^{\phi_{s}}$
and $\epsilon \in \{+,-\}$, we have that
\[ |\zeta-(g_{n}^\epsilon)^{-1}(0)| \geq \phi_{s}(n) r_{n} -r_{n} =
(\phi_{s}(n)-1)r_{n}.\]
Using this together with $\phi_{s}(n) \geq 2$ and the fact that
$1-|g_{n}^\epsilon(0)| \leq 2 r_{n}^{2}$,
it follows,  for each $\zeta \in \Ext (C_{n}^{\epsilon,\phi_{s}})$,
\[
j(g_{n}^\epsilon,\zeta) =
\frac{1-|g_{n}^\epsilon(0)|^{2}}{|(g_{n}^\epsilon)^{-1}(0) - \zeta|^{2}}
\leq
\frac{2 (1-|g_{n}^\epsilon(0)|)}{(\phi_{s}(n) -1)^{2}  r_{n}^{2}}
\leq
\frac{4}{(\phi_{s}(n)-1)^{2}}
\leq
\left( \frac{4}{\phi_{s}(n)}\right)^{2}.
\]

Then note that every element $\gamma \in G^{\phi_{s}}$ is uniquely
represented by a reduced word
$g_{n_1(\gamma)}^{\epsilon_1} \cdots g_{n_k(\gamma)}^{\epsilon_k}$
of elements in $G_{1}^{\phi_{s}}$.
As usual, $k$ will be referred to as the word length of $\gamma$.
With $G^{\phi_{s}}_k$ denoting the set of  elements of $G^{\phi_{s}}$
of word length $k \in \N$, we then obtain
for  $\zeta \in L_{J}'(G^{\phi_{s}})$, using the chain rule,
\begin{eqnarray*}
\sum_{\gamma \in G^{\phi_{s}} \setminus \{id.\}} j(\gamma,\zeta)^{s}
&=& \sum_{k  \in \N}
\sum_{\gamma \in G^{\phi_{s}}_{k}} j(g_{n_1(\gamma)}^{\epsilon_1} \cdots
g_{n_k(\gamma)}^{\epsilon_k},\zeta)^{s} \\
& = &
\sum_{k \in \N}  \sum_{\gamma \in G^{\phi_{s}}_{k}}
\prod_{i=1}^{k} j(g_{n_i(\gamma)}^{\epsilon_i},g_{n_{i+1}
(\gamma)}^{\epsilon_{i+1}} \cdots g_{n_k(\gamma)}^{\epsilon_k}
(\zeta))^{s} \\
& \leq &
\sum_{k \in \N}  \sum_{\gamma \in G^{\phi_{s}}_{k}} \prod_{i=1}^{k}
\left( \frac{4}{\phi_{s}(n_i(\gamma))} \right)^{2s} \\
& \leq &
\sum_{k \in \N} \left[2 \sum_{n \in \N}
\left( \frac{4}{\phi_{s}(n)}\right)^{2s}\right]^{k}
 < \infty.
\end{eqnarray*}
This shows that the horospherical Poincar\'e series for $G^{\phi_{s}}$
at $\zeta$ converges at the exponent $s$.
Applying Theorem~\ref{basic1} to the associated $s$-conformal ending
measure $\mu_{\zeta}$, finishes the proof of the proposition.
\end{proof}

\bigskip

\noindent
{\bf Example 2.} \\
{\em Non-atomic conformal ending measures for
normal subgroups of Schottky groups.}

Here we consider a class  of infinitely generated Kleinian
groups which has already been studied in \cite{FS}.
Let $G = G_1 * G_2$ denote the Schottky group obtained
as the free product of the two finitely generated non-elementary
Schottky groups $G_1$ and $G_2$. Assume that $\delta(G)>N/2$,
and that $G_1$ and $G_2$ have Dirichlet fundamental domains $F_1$ and
$F_2$  at $0 \in \D$ such that
$(\D \setminus F_1) \cap (\D \setminus F_2) = \emptyset$.
Then $G$ is isomorphic to the semidirect product $\Gamma \rtimes G_2$,
where $\Gamma$ denotes the kernel of the canonical
group homomorpism from $G$ onto $G_{2}$.
By the splitting lemma for groups, this is equivalent to the
existence of a short exact sequence of group homomorphisms
$$
1 \to \Gamma \to G \to G_2 \to 1.
$$
Then note that, since  $\Gamma$ is normal in $G$,
we have that $L(G)=L(\Gamma)$. Moreover, by a result
of Brooks \cite{Br}, we have  that $\delta(\Gamma) < \delta(G)$.
Finally, note that by construction we have that
$L(G_2) \subset L_J(\Gamma)$.

\begin{proposition}\label{mut-sing}
For $G, G_{1},G_{2}$ and $\Gamma$
given as above, the following hold:
\begin{itemize}
\item[$(\rm i)$]  $\Gamma$ is of convergence type.

\item[$(\rm ii)$] The $\delta(\Gamma)$-conformal ending
measure $\mu_{\zeta}$ for $\Gamma$ has no atoms,
for each $\zeta \in L_J(\Gamma)$.
\item[$(\rm iii)$]  If $\mu_{\zeta_{1}}$ and $\mu_{\zeta_{2}}$
are two $\delta(\Gamma)$-conformal ending measures for $\Gamma$,
for distinct  $\zeta_{1}, \zeta_2 \in L(G_2)$, then $\mu_{\zeta_{1}}$ and $\mu_{\zeta_{2}}$ are mutually singular.
\end{itemize}
\end{proposition}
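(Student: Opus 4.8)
I would establish the three parts in the stated order, since (ii) presupposes (i) — one needs $P_\Gamma(z,\delta(\Gamma))<\infty$ for the $\delta(\Gamma)$-ending measure to be defined via the construction of Section~\ref{alt-constr} — and (iii) uses (ii).

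\emph{(i).} The key is the free-product decomposition $\Gamma=\ast_{w\in G_2}wG_1w^{-1}$: the factors are distinct because a free factor is malnormal in a free product, so $wG_1w^{-1}=w'G_1w'^{-1}$ with $w,w'\in G_2$ forces $w^{-1}w'\in G_1\cap G_2=\{\mathrm{id}\}$, and ping-pong between the pairwise disjoint sets $w(\D\setminus F_1)$, $w\in G_2$ — disjoint by the hypothesis $(\D\setminus F_1)\cap(\D\setminus F_2)=\emptyset$ — exhibits $\Gamma$ as this free product. Expanding $P_\Gamma(0,s)$ over reduced words and using the ping-pong estimate $d(0,\gamma_1\cdots\gamma_m 0)\geq\sum_i d(0,\gamma_i 0)-C(m-1)$ for syllables in consecutive distinct factors, $P_\Gamma(0,s)$ is dominated by a geometric series in $A(s):=\sum_{w\in G_2}\sum_{h\in G_1\setminus\{\mathrm{id}\}}e^{-sd(0,whw^{-1}0)}$; one checks that $A(\delta(\Gamma))<\infty$ and is small enough for that series to converge, the point being that Brooks' strict inequality $\delta(\Gamma)<\delta(G)$ places the critical exponent of $\Gamma$ in the convergent regime. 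This is the computation of \cite{FS}, which I would cite or reproduce only briefly.

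\emph{(ii).} Record two facts. Since $\Gamma\leq G=G_1*G_2$ and the latter is convex cocompact, $\Gamma$ has no parabolic elements. And since $\Gamma$ is a non-trivial normal subgroup of $G$ one has $L(\Gamma)=L(G)=L_H(\Gamma)$: fixing a loxodromic $\gamma_0\in\Gamma$, all conjugates $g\gamma_0 g^{-1}$ ($g\in G$) lie in $\Gamma$ with translation length $\tau(\gamma_0)$, and choosing $g_n\in G$ with $(g_n(\gamma_0^-),g_n(\gamma_0^+))\to(q,\xi)$ for a fixed $q\neq\xi$ (possible by density of the $G$-orbit on pairs of distinct limit points), the axes of $\gamma_n:=g_n\gamma_0 g_n^{-1}$ converge to the geodesic $(q,\xi)$ at bounded distance from $0$, so suitable powers $\gamma_n^{k_n}(0)\in\Gamma(0)$ shadow $[0,\xi)$ to increasing depth, i.e. $\Gamma(0)$ accumulates at $\xi$ inside a fixed horoball — this is the property of Matsuzaki \cite{Mat} used in the remark after Proposition~\ref{prep1}. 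Hence every $\xi\in L(\Gamma)$ lies in $L_H(\Gamma)$ and is not a parabolic fixed point of $\Gamma$, so by Proposition~\ref{prep1} it is not an atom of any $\delta(\Gamma)$-conformal measure for $\Gamma$; as $\mu_\zeta$ is supported on $L(\Gamma)$ for $\zeta\in L_J(\Gamma)$ (end of Section~\ref{alt-constr}), it follows that $\mu_\zeta$ is non-atomic.

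\emph{(iii).} The measures $\mu_{\zeta_1},\mu_{\zeta_2}$ are non-atomic by (ii), so mutual singularity must come from disjoint essential supports, which I would obtain from the end structure of $M_\Gamma$. As $\Gamma\triangleleft G$, the free quotient $G_2\cong G/\Gamma$ acts on $M_\Gamma=\D/\Gamma$ with quotient the convex cocompact manifold $M_G$, so $M_\Gamma$ is a tree of copies of the core of $M_G$ over the Cayley tree of $G_2$, and the ends of $M_\Gamma$ going to infinity in the cover are indexed by $\partial G_2=L(G_2)$. For $\zeta\in L(G_2)$ the defining ending sequence escapes into the corresponding end $E^{(\zeta)}$, so by \cite[Theorem~4.4]{AFT} (cf. the paragraph preceding Proposition~\ref{alternative-atomic}) $\mu_\zeta$ is essentially supported on the end limit set $\Lambda(E^{(\zeta)})$, and \cite[Proposition~4.5]{AFT} presents $\Lambda(E^{(\zeta)})$ as the essential support of an honest $\delta(\Gamma)$-conformal measure for $\Gamma$ obtained by extending a conformal measure of the end group. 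For distinct $\zeta_1,\zeta_2\in L(G_2)$ the ends $E^{(\zeta_1)},E^{(\zeta_2)}$ run off along different rays of the Cayley tree of $G_2$ and are eventually disjoint in $M_\Gamma$, so no geodesic ray can be eventually contained in both; hence $\Lambda(E^{(\zeta_1)})\cap\Lambda(E^{(\zeta_2)})=\emptyset$ and $\mu_{\zeta_1}\perp\mu_{\zeta_2}$. The hard part is exactly this last step: pinning down the end $E^{(\zeta)}$ attached to $\zeta\in L(G_2)$ and verifying it is an end in the technical sense of \cite{AFT} (compact non-empty boundary, non-compact closure) so that Theorem~4.4 and Proposition~4.5 apply, together with the disjointness of the end limit sets in spite of the $\Gamma$-saturation inherent in their definition; the estimate on $A(\delta(\Gamma))$ in (i) is also somewhat delicate, but is available from \cite{FS}, while the shadowing argument in (ii) is routine once the set-up is fixed.
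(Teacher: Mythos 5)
Your parts (ii) and (iii) follow essentially the same route as the paper. For (ii) the paper simply quotes Matsuzaki \cite{Mat} (Theorem~6: $L_r(G')\subset L_H(\Gamma')$ for a non-trivial normal subgroup $\Gamma'$ of $G'$) and then applies Proposition~\ref{prep1}, exactly as you do; your shadowing argument is just a re-derivation of that input and is fine modulo the routine choice of the powers $k_n$. For (iii) the paper does what you call the hard step concretely: it chooses two hyperplanes $H_1,H_2$ bounding images of the fundamental domain $F_2$ of $G_2$, with $H_1$ separating $\zeta_1$ from $H_2\cup\{\zeta_2\}$ and symmetrically, and notes that these project to boundaries of two disjoint \emph{ends with boundary} of $\overline M_\Gamma$ in the sense of \cite{AFT}, to which Theorem~4.4 of \cite{AFT} applies; disjointness of the two end limit sets then gives mutual singularity. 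Note that there is no single end with compact boundary ``indexed by'' a point $\zeta\in L(G_2)$, so your tree picture should be replaced by exactly such a one-cut (or finitely-deep cut) argument; with that repair your (iii) coincides with the paper's.

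The genuine gap is in (i). Your argument needs, at the critical exponent $s=\delta(\Gamma)$ itself, that the syllable sum $A(s)=\sum_{w\in G_2}\sum_{h\in G_1\setminus\{\mathrm{id}\}}e^{-sd(0,whw^{-1}(0))}$ be finite \emph{and} strictly below the threshold making the geometric series converge, and neither claim follows from Brooks' inequality $\delta(\Gamma)<\delta(G)$: that inequality compares $\delta(\Gamma)$ with $\delta(G)$ and says nothing about the behaviour of $\Gamma$'s own Poincar\'e series at its own exponent, which is precisely what ``convergence type'' asks. Indeed, finiteness of $A(\delta(\Gamma))$ already requires $\delta(\Gamma)>\delta(G_1)$ (the inner sum diverges at $s=\delta(G_1)$ because the convex cocompact group $G_1$ is of divergence type), which you do not establish, and even granting finiteness one could have $A(\delta(\Gamma))$ exactly at the threshold, the boundary case in which the bound is useless; ``small enough'' is therefore an assertion of the conclusion, not a proof. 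Deferring to \cite{FS} does not close this: the present paper does not extract (i) from \cite{FS}, but instead argues softly as follows. If $\Gamma$ were of $\delta(\Gamma)$-divergence type, then Corollary~4.3 of \cite{MY} (a normal subgroup of divergence type forces $\delta(\Gamma)=\delta(G)$ and divergence type of $G$) would give $\delta(\Gamma)=\delta(G)$, contradicting Brooks' strict inequality $\delta(\Gamma)<\delta(G)$, which is available here because $G/\Gamma\cong G_2$ is non-amenable and $\delta(G)>N/2$. You should either substitute this argument or supply the missing estimate on $A$ at $s=\delta(\Gamma)$. Your observation that (ii) presupposes (i), since the ending measure at $s=\delta(\Gamma)$ is only constructed when $P(z,\delta(\Gamma))<\infty$, is correct and worth keeping.
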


\begin{proof}
For the proof of $(\rm i)$, note that Corollary~4.3 in \cite{MY} states
that if $\Gamma'$ is a Kleinian group of divergence type which is normal
in some other Kleinian group $G'$, then we have that
$\delta(\Gamma')=\delta(G')$ and that $G'$ is of $\delta(G')$-divergence
type.
Therefore, assuming that  $\Gamma$ is of
$\delta(\Gamma)$-divergence type would imply that
$\delta(\Gamma)=\delta(G)$, and this is clearly not the case.
This finishes the proof of $(\rm i)$.

For the proof of $(\rm ii)$, note that Theorem~6 in \cite{Mat} states
that if $\Gamma'$ is a non-trivial normal subgroup of a Kleinian
group $G'$, then $L_r(G') \subset L_H(\Gamma')$.
Hence for our situation here, where $\Gamma$ is normal
in $G$ and where $G$ is  a Schottky group,
it follows that $L(\Gamma)=L(G)=L_r(G)\subset L_H(\Gamma)$
and hence, we can deduce
$L(\Gamma)=L_H(\Gamma)$. An application of Proposition~4.1 in
Section~\ref{big-horo-atoms} then gives  that any
$\delta(\Gamma)$-conformal measure for $\Gamma$ is atomless
(note that we have by construction that $\Gamma$ does not have
parabolic elements). This completes the proof of $(\rm ii)$.

Finally, in order to prove $(\rm iii)$, let $\zeta_{1},  \zeta_2 \in
L(G_2)$ be given such that  $\zeta_1 \neq \zeta_2$. Since $G_2$ is a
finitely generated Schottky group, it is not  difficult to see that
there are hyperplanes in $\D$, denoted $H_1$ and $H_2$, which are at
the boundary of some image of the Dirichlet fundamental domain $F_2$
of $G_2$, such that the following holds. First, $H_1$ separates $\D$
into halfspaces one of which has $\zeta_1$ at infinity, and the other
contains $H_2$ and has $\zeta_2$ at infinity; secondly, the analogous
condition holds for $H_2$. By construction of $\Gamma$, it now follows
that $H_1$ and $H_2$, when projected to
$\overline M_\Gamma=(\D \cup \Omega(\Gamma))/\Gamma$, separate distinct
`ends with boundary' as introduced in \cite{AFT}, Subsection 4.3.
Since Theorem~4.4 in \cite{AFT} holds for such ends with boundary as
well, we can conclude that the measures $\mu_{\zeta_{1}}$ and
$\mu_{\zeta_{2}}$ have distinct essential supports and thus are
mutually singular.
\end{proof}

\bigskip
\noindent
{\bf Example 3.} \\
{\em Atomic $\delta(\Gamma)$-conformal ending measures
at  limit points which are not fixed points.}

We construct a Kleinian group $\Gamma$ for which
there is a purely atomic $\delta(\Gamma)$-conformal ending measure
at a J{\o}rgensen point of $\Gamma$ that is not a parabolic fixed point.
Similar as in Example~2, let $G = G_1 * G_2$ such that we have
an exact sequence of  group homomorphisms
$$
1 \to \Gamma \to G \to G_2 \to 1.
$$
This time  $G_{2}$ denotes a parabolic
Abelian subgroup of $\Con(N)$ with fixed point equal to $\zeta$, and $G_1 < \Con(N)$ is
a non-elementary Kleinian group
with $\Omega(G_1) \neq \emptyset$. As in the previous example, let us assume that
$G_1$ and $G_2$ have Dirichlet fundamental domains $F_1$ and
$F_2$  at $0 \in \D$
such that
$(\D \setminus F_1) \cap (\D \setminus F_2) = \emptyset$.
Moreover, we assume that $\delta(G)>N/2$ and  that
$G $ is  of convergence type. Clearly,  the latter condition is
 always satisfied if we for instance choose  $G_1$ such that
$\delta(G_1)=N$.
Then note that, since $G/\Gamma$ is isomorphic to the
amenable group $G_2$,
an application of a  result of  \cite{Br}
gives that $\delta(\Gamma) = \delta(G)$. Also,
note that  $\zeta$  is a bounded parabolic fixed point of
$G$, whereas $\zeta$ is a J{\o}rgensen limit point of $\Gamma$
which is not fixed by any non-trivial element of $\Gamma$.

\begin{proposition}
\label{final}
For $G, G_{1},G_{2},\Gamma$  and $\zeta$
given as above, we have that the $\delta(\Gamma)$-conformal ending
measure $\mu_{\zeta}$ for $\Gamma$ is essentially supported on
$\Gamma(\zeta)$. In particular, we hence have that $\mu_{\zeta}$ is purely atomic.
\end{proposition}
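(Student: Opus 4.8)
The plan is to deduce the proposition from Theorem~\ref{basic1}, applied to the group $\Gamma$, the point $\zeta$, and the exponent $s=\delta(\Gamma)$. Two of the three hypotheses of that theorem are immediate from the standing assumptions: $\zeta$ is a J{\o}rgensen limit point of $\Gamma$, so $\zeta \in D(\Gamma)=\Omega(\Gamma)\cup L_J(\Gamma)$, and $\zeta$ is not fixed by any non-trivial element of $\Gamma$. Hence everything reduces to verifying the third hypothesis, namely that the horospherical Poincar\'e series $\sum_{\gamma \in \Gamma} j(\gamma,\zeta)^{\delta(\Gamma)}$ of $\Gamma$ at $\zeta$ converges; granting this, Theorem~\ref{basic1} gives at once that the essential support of $\mu_\zeta$ equals $\Gamma(\zeta)$ and that $\mu_\zeta$ is purely atomic, coinciding with the normalised sum of the point masses $j(\gamma,\zeta)^{\delta(\Gamma)}\,{\bf 1}_{\gamma(\zeta)}$, $\gamma \in \Gamma$.

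To prove this convergence I would pass to the ambient group $G$. By the result of Brooks quoted above we have $\delta(\Gamma)=\delta(G)$, and by assumption $G$ is of convergence type, so $P(z,\delta(G))<\infty$. Since $\zeta$ is a bounded parabolic fixed point of $G$, Lemma~\ref{bounded-parabolic} applies and shows that the reduced horospherical Poincar\'e series
$$
\P_{\mbox{\tiny{red}}}(\zeta,\delta(G))=\sum_{g \in G/\mathrm{Stab}_G(\zeta)} j(g,\zeta)^{\delta(G)}
$$
of $G$ at $\zeta$ converges. Now, because $\zeta$ is fixed by no non-trivial element of $\Gamma$, we have $\Gamma \cap \mathrm{Stab}_G(\zeta)=\{\mathrm{id}\}$, so the assignment $\gamma \mapsto \gamma\,\mathrm{Stab}_G(\zeta)$ is an injection of $\Gamma$ into $G/\mathrm{Stab}_G(\zeta)$; moreover $\mathrm{Stab}_G(\zeta)$ preserves every horosphere centred at the parabolic point $\zeta$, so $j(\cdot,\zeta)$ is constant on each of its cosets and the term of the reduced series attached to the coset $\gamma\,\mathrm{Stab}_G(\zeta)$ equals $j(\gamma,\zeta)^{\delta(G)}$. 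Comparing the two sums term by term, all terms being positive, yields
$$
\sum_{\gamma \in \Gamma} j(\gamma,\zeta)^{\delta(\Gamma)}=\sum_{\gamma \in \Gamma} j(\gamma,\zeta)^{\delta(G)} \le \sum_{g \in G/\mathrm{Stab}_G(\zeta)} j(g,\zeta)^{\delta(G)}=\P_{\mbox{\tiny{red}}}(\zeta,\delta(G))<\infty,
$$
as required.

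The step I expect to need the most care is precisely this comparison: one must check that $\mathrm{Stab}_\Gamma(\zeta)$ is trivial, so that what appears on the left is the full horospherical series of $\Gamma$ with no reduction; that the map $\gamma \mapsto \gamma\,\mathrm{Stab}_G(\zeta)$ is genuinely injective; and that the $\mathrm{Stab}_G(\zeta)$-coset of $\gamma$ contributes exactly $j(\gamma,\zeta)^{\delta(G)}$ to $\P_{\mbox{\tiny{red}}}(\zeta,\cdot)$ --- all of which rest on $\zeta$ being a parabolic fixed point of $G$ whose stabiliser meets $\Gamma$ only in the identity. Once the convergence is in hand, the conclusion is a direct citation of Theorem~\ref{basic1}.
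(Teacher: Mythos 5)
Your proposal is correct and follows essentially the same route as the paper: Brooks' theorem gives $\delta(\Gamma)=\delta(G)$, Lemma~\ref{bounded-parabolic} (with $G$ of convergence type and $\zeta$ a bounded parabolic fixed point of $G$) gives convergence of $\P_{\mbox{\tiny{red}}}(\zeta,\delta(G))$, and the semidirect product structure identifies this with the horospherical series of $\Gamma$ at $\zeta$, after which Theorem~\ref{basic1} applies. The only cosmetic differences are that the paper notes $\Gamma$ is a full system of coset representatives for $G/\mathrm{Stab}_G(\zeta)$ (so your inequality is in fact an equality) and invokes Theorem~\ref{general} rather than Theorem~\ref{basic1}, which is equivalent here since $\mathrm{Stab}_\Gamma(\zeta)$ is trivial.
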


\begin{proof}
By Lemma~\ref{bounded-parabolic},  we have that the reduced horospherical
Poincar\'e series
$$
\sum_{g \in G/\mathrm{Stab}_G(\zeta)} j(g,\zeta)^{s}
$$
converges for $s$ equal to the exponent $\delta(G)$.
Recall that $\delta(G)=\delta(\Gamma)$, that
$\mathrm{Stab}_G(\zeta)=G_2$, and that one of our assumptions is that
$G$ is of convergence type. We now show that
the horospherical Poincar\'e series
$\sum_{\gamma \in \Gamma} j(\gamma,\zeta)^{\delta(\Gamma)}$
for $\Gamma$ at the exponent $\delta(\Gamma)$ converges. Indeed, since
$G=\Gamma \rtimes G_2$, we can take $\Gamma$ as a system of
representatives of $G/\mathrm{Stab}_G(\zeta)$.
With this choice, we then have
$$
\sum_{g \in G/\mathrm{Stab}_G(\zeta)} j(g,\zeta)^{\delta(G)}
=\sum_{\gamma \in \Gamma} j(\gamma,\zeta)^{\delta(\Gamma)}.
$$
Therefore, the horospherical Poincar\'e series for $\Gamma$ at the 
exponent $\delta(\Gamma)$ converges, and hence we can apply 
Theorem~\ref{general}, which then gives the existence of a purely atomic
$\delta(\Gamma)$-conformal ending measure for $\Gamma$
which is essentially supported on $\Gamma(\zeta)$.

\end{proof}


\begin{thebibliography}{99}

{\footnotesize

\bibitem{AFT} J. W. Anderson, K. Falk and P. Tukia, `Conformal
measures associated to ends of hyperbolic $n$-manifolds', {\em Quart.
J. Math.} {\bf 58} (2007), 1--15.

\bibitem{be} A. F. Beardon, {\em The geometry of discrete groups},
Springer
Verlag, New York, 1983.

\bibitem{BeMa} A. F. Beardon and B. Maskit, `Limit points of Kleinian
groups and finite sided fundamental polyhedra', {\em Acta Math.} {\bf
132} (1974), 1--12.

\bibitem{bi} C. Bishop, `On a theorem of Beardon and Maskit', {\em
Ann. Acad. Sci. Fenn. Math.} {\bf 21} (1996), no. 2, 383--388.

\bibitem{BJ} C. J. Bishop and P. W. Jones, `Hausdorff dimension
and Kleinian groups', {\em Acta Math.} {\bf 56} (1997), 1--39.

\bibitem{Br} R. Brooks, `The bottom of the spectrum of a Riemannian
cover',
{\em J. Reine Angew. Math.} {\bf 357} (1985), 101--114.

\bibitem{FS} K. Falk and B. O. Stratmann,
`Remarks on Hausdorff dimensions for transient limit sets of Kleinian
groups',
{\em Tohoku Math. J.}  (2), {\bf 56}  (2004),  571--582.

\bibitem{FT} K. Falk and P. Tukia, `A note on Patterson measures',
   {\em Kodai Math. J.} {\bf 29} (2006), 227--236.

\bibitem{Mas} B. Maskit, {\em Kleinian groups}, Springer Verlag,
	Berlin, 1989.

\bibitem{Mat} K. Matsuzaki, `Conservative action of Kleinian
groups with respect to the Patterson-Sullivan measure', {\em Comp.
Meth. Funct. Th.} {\bf 2} (2002), 469--479.

\bibitem{MT} K. Matsuzaki and M. Taniguchi, {\em Hyperbolic
	  manifolds and Kleinian groups}, Oxford Math.  Monographs, 1998.

\bibitem{MY} K. Matsuzaki and Y. Yabuki, `The Patterson-Sullivan
measure and proper conjugation for Kleinian groups of divergence
type', {\em Erg. Th. Dyn. Syst.} {\bf 29} (2009), 657--665.

\bibitem{Mo} S. Morosawa, `Invariant subsets of the limit set for a
Fuchsian group',
{\em Tohoku Math. J.}  (2), {\bf 42}  (1990),  429--437.

\bibitem{Ni} P. J. Nicholls, {\em The ergodic theory of discrete
groups},
LMS Lecture Notes Series 143, Cambridge Univ. Press, 1989.

\bibitem{past01} J. R. Parker, B. O. Stratmann, `Kleinian groups
     with singly cusped parabolic fixed points',
     {\em Kodai J. Math.} {\bf 24} (2001), 169--206.

\bibitem{Pat} S. J. Patterson, `The limit set of a Fuchsian
group', {\em Acta Math.} {\bf 136} (1976), 241--273.

\bibitem{SBJ} B. O. Stratmann, `The exponent of convergence of
Kleinian groups; on a theorem of Bishop and Jones',
{\em Progress in Probability} (Birkh\"auser Verlag) Vol. {\bf 57}
(2004), 93--107.

\bibitem{SU07} B. O. Stratmann, M. Urba\'nski,
`Pseudo-Markov systems and infinitely generated Schottky groups',
{\em Amer. J. Math.}  {\bf 129} (4) (2007), 1019--1063.

\bibitem{Sul79} D. Sullivan, `The density at infinity of a discrete
group of hyperbolic motions', {\em IHES Publ.  Math.} {\bf 50} (1979),
171--202.

\bibitem{Sul81} D. Sullivan,
`On the ergodic theory at infinity of an arbitrary discrete
group of hyperbolic motions',
{\em Riemann surfaces and related topics},
{\em Ann. Math. Studies} {\bf 97}, Princeton University Press,
465--496, 1981.

\bibitem{Sul} D. Sullivan,
`Related aspects of positivity in Riemannian Geometry', {\em J.
Differential Geom.} {\bf 25} (1987), 327--351.

\bibitem{Tu} P. Tukia, `Conservative action and the horospheric limit
set', {\em Ann. Acad. Sci. Fenn.} {\bf 22} (1997), 387--394.
}

\end{thebibliography}
\end{document}